\DeclareFontFamily{U}{shuffle}{}
\DeclareFontShape{U}{shuffle}{m}{n}{ <-8>shuffle7 <8->shuffle10}{}
\newcommand{\ola}{\overleftarrow}
\newcommand{\ora}{\overrightarrow}
\newcommand{\bfk}{{\boldsymbol{\sl{k}}}}
\newcommand{\bfx}{{\boldsymbol{\sl{x}}}}
\def\int{\displaystyle\!int}
\def\lim{\displaystyle\!lim}
\def\sum{\displaystyle\!sum}
\def\sup{\displaystyle\!sup}
\def\inf{\displaystyle\!inf}
\def\cap{\displaystyle\!cap}
\def\max{\displaystyle\!max}
\def\min{\displaystyle\!min}
\def\frac{\displaystyle\!frac}
\let\oldsection\section
\renewcommand\section{\setcounter{equation}{0}\oldsection}
\DeclareMathOperator*{\dep}{dep}
\DeclareMathOperator{\Li}{Li}
\def\R{\mathbb{R}}
\def\N{\mathbb{N}}
\def\ze{\zeta}
\theoremstyle{plain}
\newtheorem{thm}{Theorem}[section]
\newtheorem{lem}[thm]{Lemma}
\newtheorem{cor}[thm]{Corollary}
\newtheorem{con}[thm]{Conjecture}
\newtheorem{pro}[thm]{Proposition}
\theoremstyle{definition}
\newtheorem{re}[thm]{Remark}
\newtheorem{exa}[thm]{Example}
\begin{document}
\title{\bf Mneimneh-type Binomial Sums of Multiple Harmonic-type Sums}
\author{
{Ende Pan$^{a,}$\thanks{Email: 13052094150@163.com}\quad{and}\quad Ce Xu$^{b,}$\thanks{Email: cexu2020@ahnu.edu.cn}}\\[1mm]
a. \small College of Teacher Education, Quzhou University, \\ \small Quzhou 324022, P.R. China\\
b. \small School of Mathematics and Statistics, Anhui Normal University,\\ \small Wuhu 241002, P.R. China
}

\date{}
\maketitle

\noindent{\bf Abstract.} In this paper, we establish some expressions of Mneimneh-type binomial sums involving multiple harmonic-type sums in terms of finite sums of Stirling numbers, Bell numbers and some related variables. In particular, we present some new formulas of Mneimneh-type binomial sums involving generalized (alternating) harmonic numbers. Further, we establish a new identity relating the multiple zeta star values $\ze^\star(m+2,\{1\}_{r-1})$ and specific multiple polylogarithms by applying the Toeplitz principle. Furthermore, we present some interesting consequences and illustrative examples.

\medskip

\noindent{\bf Keywords}: Mneimneh's identity; harmonic numbers; multiple zeta (star) values; multiple polylogarithms; multiple harmonic (star) sums; Binomial coefficients.
\medskip

\noindent{\bf AMS Subject Classifications (2020):} 11M32, 11M99.

\section{Introduction}

We begin with some basic notations. Let $\N$ be the set of positive integers and $\N_0:=\N\cup \{0\}$.
A finite sequence $\bfk:=(k_1,\ldots, k_r)\in\N^r$ is called a \emph{composition}. We put
\begin{equation*}
 |\bfk|:=k_1+\cdots+k_r,\quad \dep(\bfk):=r,
\end{equation*}
and call them the weight and the depth of $\bfk$, respectively. If $k_1>1$, $\bfk$ is called \emph{admissible}. As a convention, we denote by $\{1\}_d$ the sequence of 1's with $d$ repetitions.

For a composition $\bfk=(k_1,\ldots,k_r)$ and positive integer $n$, the \emph{multiple harmonic sums} (MHSs) and \emph{multiple harmonic star sums} (MHSSs) are defined by
\begin{align}
\zeta_n(\bfk):=\sum\limits_{n\geq n_1>\cdots>n_r>0 } \frac{1}{n_1^{k_1}\cdots n_r^{k_r}}\quad
\text{and}\quad
\zeta^\star_n(\bfk):=\sum\limits_{n\geq n_1\geq\cdots\geq n_r>0} \frac{1}{n_1^{k_1}\cdots n_r^{k_r}}\label{MHSs+MHSSs},
\end{align}
respectively. If $n<r$ then ${\zeta_n}(\bfk):=0$ and ${\zeta _n}(\emptyset )={\zeta^\star _n}(\emptyset ):=1$. In particular, if $\bfk=(k)\in \N$ in \eqref{MHSs+MHSSs}, then
\begin{align}\label{defn-gener-harmonicnumber}
\ze_n(k)= \ze^\star_n(k)\equiv H_n^{(k)}=\sum_{j=1}^n \frac{1}{j^k}
\end{align}
is the $n$-th generalized \emph{harmonic number} of order $k$, and furthermore, if $k=1$ then $H_n\equiv H_n^{(1)}$ is the classical $n$-th harmonic number.
When taking the limit $n\rightarrow \infty$ in \eqref{MHSs+MHSSs} we get the so-called the \emph{multiple zeta values} (MZVs) and the \emph{multiple zeta star values} (MZSVs), respectively (\cite{H1992,DZ1994,Zhao2016})
\begin{align*}
{\zeta}( \bfk):=\lim_{n\rightarrow \infty}{\zeta _n}(\bfk) \quad
\text{and}\quad
{\zeta^\star}( \bfk):=\lim_{n\rightarrow \infty}{\zeta^\star_n}( \bfk),
\end{align*}
defined for an admissible composition  $\bfk$ to ensure convergence of the series. More generally, let $\bfk=(k_1,\ldots,k_r)\in\N^r$ and $\bfx=(x_1,\dotsc,x_r)$ where $x_1,\dotsc,x_r$ are complex variables.
The classical \emph{multiple polylogarithm} (MPL) and \emph{multiple polylogarithm star function}
with $r$ variables are defined by
\begin{align}
\Li_{\bfk}(\bfx):=\sum_{n_1>n_2>\cdots>n_r>0} \frac{x_1^{n_1}\dotsm x_r^{n_r}}{n_1^{k_1}\dotsm n_r^{k_r}}\quad
\text{and}\quad
\Li^\star_{\bfk}(\bfx):=\sum_{n_1\geq n_2\geq \cdots\geq n_r>0} \frac{x_1^{n_1}\dotsm x_r^{n_r}}{n_1^{k_1}\dotsm n_r^{k_r}},
\end{align}
respectively,
which converge if $|x_1\cdots x_{j}|<1$ for all $j=1,\dotsc,r$.
For a composition $\bfk=(k_1,\ldots,k_r)$ and positive integer $n$, and $z\in \R$, we define the following \emph{multiple harmonic-type sums}
\begin{align}\label{defn-mhtss}
\zeta^\star_n(\bfk;z):=\sum\limits_{n\geq n_1\geq\cdots\geq n_r>0} \frac{z^{n_r}}{n_1^{k_1}\cdots n_r^{k_r}}.
\end{align}

The motivation of this paper arises from the results of Mneimneh \cite{M2023}, Campbell \cite{C2023}, Komatsu-Wang \cite{KW2024} and Gen$\check{\rm c}$ev \cite{G2024}. In a 2023 \emph{Discrete Mathematics} paper, Mneimneh \cite{M2023} introduced the following remarkable formula for a binomial sum of harmonic numbers
\begin{align}\label{Mneimneh-identity}
\sum_{k=0}^n H_k\binom{n}{k} p^k(1-p)^{n-k}=\sum_{i=1}^n \frac{1-(1-p)^i}{i}\quad (p\in [0,1]).
\end{align}
The result in \eqref{Mneimneh-identity} is of interest due to how it generalizes the known elegant formula (\cite{PS2003,Spi2007,Spi2019})
\begin{align*}
\sum_{k=0}^n H_k \binom{n}{k}= 2^n \left( H_n-\sum_{j=1}^n \frac{1}{j2^j}\right).
\end{align*}
In 2023, Campbell \cite{C2023} gave two new proofs of \eqref{Mneimneh-identity} by using Zeilberger's algorithm \cite{PZ1996} and beta-type integral formula. Further, in 2024, Komatsu and Wang \cite{KW2024} extended Mneimneh's formula to the generalized hyperharmonic numbers. In particular, they proved that this formula \eqref{Mneimneh-identity} can be extended to the generalized harmonic numbers and established explicit formulas.
Gen$\check{\rm c}$ev \cite[Thm. 2.1]{G2024} established a generalization of Mneimneh summation formula
\begin{align}\label{equ-exp-z}
\sum_{k=0}^n \binom{n}{k}\ze^\star_k(r;z)p^k(1-p)^k=\sum_{n\geq n_1 \geq \cdots n_r\geq 1} \frac{(1-p)^{n_1}\left(\left(1+\frac{zp}{1-p}\right)^{n_r}-1\right)}{n_1\cdots n_r},
\end{align}
where $r\in \N,p,z\in \R$ and $p\neq 1$. Further, Gen$\check{\rm c}$ev \cite{G2024} also mentioned that another type of generalization of the Mneimneh binomial sums by replacing the generalized harmonic numbers with the multiple harmonic sums could be considered.
Very recently, the current authors \cite{PX2024} used the method of integrals of natural logarithms to establish a general Mneimneh-type binomial sums involving Bell numbers and some Mneimneh-type binomial sums involving (alternating) harmonic numbers.

In this paper, we will use the methods of our previous paper \cite{PX2024} to establish some new formulas of Mneimneh-type binomial sums involving multiple harmonic-type sums. The main results are the following four theorems.
\begin{thm} \label{thm:Mneimneh-type-mhtss} For any reals $x,y$ and $z\in (-\infty,1]$ with $x/(x+y)\geq 0$ and $n,p\in \N$, we have
\begin{align}\label{equ:Mneimneh-type-mhtss}
&\sum_{k=0}^n x^ky^{n-k}\binom{n}{k} \zeta_k^\star(\{1\}_p;z)\nonumber\\&=(-1)^{p-1} (x+y)^n \sum_{j=1}^n \left(\Big(\frac{xz+y}{x+y}\Big)^j-\Big(\frac{y}{x+y}\Big)^j\right)\left\{\sum_{i=0}^{p-1} (-1)^i \frac{Y_i(n)}{i!} \frac{s(j,p-i)}{j!}\right\}.
\end{align}
\end{thm}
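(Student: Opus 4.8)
The plan is to combine an iterated-integral representation of the harmonic-type sum $\zeta^\star_k(\{1\}_p;z)$ with a symmetric-function identity that separates the dependence on $n$ from the dependence on the outer summation index. Throughout write $s:=x+y$, $w_0:=y/(x+y)$ and $w_1:=(xz+y)/(x+y)$, and set $S_p(z):=\sum_{k=0}^n x^k y^{n-k}\binom{n}{k}\zeta_k^\star(\{1\}_p;z)$. The hypotheses $x/(x+y)\ge 0$ and $z\le 1$ guarantee $0\le w_0\le 1$ and $w_1\le 1$, so every integral below stays inside the region where the relevant series converge; in fact each integrand becomes a polynomial after dividing by $1-w$, so there is no genuine singularity at $w=1$. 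The key reduction is the substitution $w=(xu+y)/s$, under which $du/(1-u)=dw/(1-w)$ (since $1-w=x(1-u)/s$) and the limits $u=0,z$ become $w=w_0,w_1$.

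First I would settle the base case $p=1$. Using $\zeta^\star_k(\{1\}_1;z)=\int_0^z\frac{1-u^k}{1-u}\,du$ and interchanging the finite sum with the integral gives $S_1(z)=\int_0^z\frac{s^n-(xu+y)^n}{1-u}\,du$; the substitution above turns this into $s^n\int_{w_0}^{w_1}\frac{1-w^n}{1-w}\,dw$, and integrating the geometric polynomial $\frac{1-w^n}{1-w}=\sum_{i=0}^{n-1}w^i$ yields $S_1(z)=s^n\sum_{j=1}^n\frac{w_1^j-w_0^j}{j}$, which is exactly \eqref{equ:Mneimneh-type-mhtss} for $p=1$. For the inductive step I would use the recursion
\[
\zeta^\star_k(\{1\}_p;z)=\int_0^z\frac{\zeta^\star_k(\{1\}_{p-1};1)-\zeta^\star_k(\{1\}_{p-1};u)}{1-u}\,du,
\]
which follows by writing the numerator as $\sum_{k\ge n_1\ge\cdots\ge n_{p-1}\ge1}\frac{1-u^{n_{p-1}}}{n_1\cdots n_{p-1}}$ and integrating $\frac{1-u^{m}}{1-u}=\sum_{l=0}^{m-1}u^l$, which appends an innermost index carrying $z$. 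Interchanging the binomial sum with this integral gives $S_p(z)=\int_0^z\frac{S_{p-1}(1)-S_{p-1}(u)}{1-u}\,du$, and the same substitution (noting that $S_{p-1}(1)$ corresponds to the upper limit $w=1$) reduces the recursion to a clean iteration of polynomials in $w_1$. Induction then produces the nested closed form
\[
S_p(z)=s^n\sum_{n\ge j_1\ge\cdots\ge j_p\ge1}\frac{1}{j_1\cdots j_{p-1}}\cdot\frac{w_1^{j_p}-w_0^{j_p}}{j_p},
\]
in which the $z$-weight correctly sits only on the innermost index $j_p$.

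The final, and I expect most delicate, step converts this nested sum into the stated Stirling/Bell form. Fixing the innermost index $j_p=j$ and summing $\frac{1}{j_1\cdots j_{p-1}}$ over $j\le j_{p-1}\le\cdots\le j_1\le n$ produces the complete homogeneous symmetric polynomial $h_{p-1}(1/j,\dots,1/n)$, so that $S_p(z)=s^n\sum_{j=1}^n\frac{w_1^j-w_0^j}{j}\,h_{p-1}(1/j,\dots,1/n)$. To separate the $j$- and $n$-dependence I would factor the generating function
\[
\sum_{m\ge0}h_m(1/j,\dots,1/n)\,t^m=\prod_{i=j}^n\Big(1-\tfrac{t}{i}\Big)^{-1}=\Big[\prod_{i=1}^n\big(1-\tfrac{t}{i}\big)^{-1}\Big]\Big[\prod_{i=1}^{j-1}\big(1-\tfrac{t}{i}\big)\Big].
\]
The first bracket expands as $\sum_{a}\zeta^\star_n(\{1\}_a)t^a$, while the second expands through the unsigned Stirling numbers of the first kind as $\sum_{l}(-1)^l\frac{s(j,l+1)}{(j-1)!}t^l$, using $e_l(1,\tfrac12,\dots,\tfrac1{j-1})=s(j,l+1)/(j-1)!$. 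Extracting the coefficient of $t^{p-1}$, reindexing by $i=p-1-l$, and recording $(-1)^{p-1-i}=(-1)^{p-1}(-1)^i$ gives
\[
\tfrac1j\,h_{p-1}(1/j,\dots,1/n)=(-1)^{p-1}\sum_{i=0}^{p-1}(-1)^i\frac{Y_i(n)}{i!}\frac{s(j,p-i)}{j!},
\]
where I invoke $\zeta^\star_n(\{1\}_i)=Y_i(n)/i!$, which holds since $Y_i(n)$ is the Bell polynomial in the power sums $H_n^{(k)}$ and $\sum_i\frac{Y_i(n)}{i!}t^i=\prod_{m=1}^n(1-t/m)^{-1}$. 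Substituting this back into the formula for $S_p(z)$ gives \eqref{equ:Mneimneh-type-mhtss}. The main obstacle is precisely this separation step: marshalling the two distinct Stirling/symmetric-function expansions and the accompanying index-and-sign bookkeeping so that the $n$-part collapses exactly to $Y_i(n)$ and the $j$-part to $s(j,p-i)$; by contrast, the integral iteration is routine once the recursion is in hand.
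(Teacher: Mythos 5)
Your proof is correct, and it takes a genuinely different route from the paper's in both of its main steps. The paper arrives at the same intermediate nested-sum formula
\begin{equation*}
\sum_{k=0}^n x^ky^{n-k}\binom{n}{k}\zeta_k^\star(\{1\}_p;z)
=(x+y)^n\sum_{n\geq n_1\geq\cdots\geq n_p\geq 1}\frac{\Bigl(\frac{xz+y}{x+y}\Bigr)^{n_p}-\Bigl(\frac{y}{x+y}\Bigr)^{n_p}}{n_1\cdots n_p},
\end{equation*}
i.e.\ \eqref{equ-mhn-1-z-mhss-2}, but not by induction on $p$: it combines the log-power integral representations of Lemmas \ref{lem-Bell-Log} and \ref{lem-x-mhs-Log} with the same substitution $u\mapsto(xu+y)/(x+y)$, after which a triple sum involving powers of three logarithms collapses by the multinomial theorem (the three logarithms sum to zero, so only one term survives). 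Your induction based on the elementary recursion $\zeta_k^\star(\{1\}_p;z)=\int_0^z\bigl(\zeta_k^\star(\{1\}_{p-1})-\zeta_k^\star(\{1\}_{p-1};u)\bigr)(1-u)^{-1}\,du$ is more transparent and keeps everything at the level of polynomial identities, which also makes plain why the hypotheses on $x,y,z$ cause no trouble. For the conversion to the Bell/Stirling form, the paper invokes the external reversal identity of Lemma \ref{lem-PMPLs2} specialized to $\bfk=(\{1\}_{p-1})$, namely \eqref{equ-emhn-Bell-Str}, together with \eqref{equ-Str-mhs-1} and \eqref{equ-Str-mhss-1}; you instead prove exactly this special case from scratch by factoring $\prod_{i=j}^n(1-t/i)^{-1}=\prod_{i=1}^n(1-t/i)^{-1}\cdot\prod_{i=1}^{j-1}(1-t/i)$ and extracting the coefficient of $t^{p-1}$, with the correct sign and index bookkeeping. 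What the paper's route buys is reusability: Lemmas \ref{lem-Bell-Log}--\ref{lem-PMPLs2} are the engine behind Propositions \ref{pro-one-miter} and \ref{pro-two-miter} and hence Theorems \ref{thm:Mneimneh-type-mhtss-2} and \ref{thm:Mneimneh-type-mhtss-3} as well, so the machinery is amortized over the whole paper. What your route buys is self-containedness and elementarity: no external lemmas are needed, and the symmetric-function factorization gives a clean conceptual explanation of why the $n$-dependence collapses to $Y_i(n)$ and the $j$-dependence to $s(j,p-i)$.
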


\begin{thm} \label{thm:Mneimneh-type-mhtss-2} For any reals $x,y$ and $z\in (-\infty,1]$ with $x/(x+y)\geq 0$ and $n,p,m\in \N$, we have
\begin{align}\label{equ:Mneimneh-type-mhtss-2}
&\sum_{k=0}^n x^ky^{n-k}\binom{n}{k} \zeta_k^\star(\{1\}_{p-1},m+1;z)\nonumber\\&=(-1)^{m+p}(x+y)^n \sum_{n\geq j\geq l\geq 1} \left(\frac{y}{x+y}\right)^j \left( \left(1+\frac{xz}{y}\right)^l-1\right)\nonumber\\&\quad\times\left\{\sum_{i=0}^{p-1} (-1)^i \frac{Y_i(n)}{i!} \frac{s(j,p-i)}{j!}\right\}\left\{\sum_{h=0}^{m-1} (-1)^h \frac{Y_h(j)}{h!} \frac{s(l,m-h)}{l!}\right\}.
\end{align}
\end{thm}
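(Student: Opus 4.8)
The plan is to derive Theorem~\ref{thm:Mneimneh-type-mhtss-2} from Theorem~\ref{thm:Mneimneh-type-mhtss} by stripping the innermost exponent $m+1$ down to $1$ with an integral of a power of a logarithm, and then to recast the resulting one--parameter binomial sum as the second Stirling--Bell block.

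First I would normalize. Writing $q:=x/(x+y)$ (so $1-q=y/(x+y)$, both admissible by the hypothesis $x/(x+y)\ge0$) and pulling out $(x+y)^n$, it suffices to evaluate the binomial average
\[
E_n:=\sum_{k=0}^n\binom nk q^k(1-q)^{n-k}\zeta_k^\star(\{1\}_{p-1},m+1;z).
\]
The engine is the elementary identity $\tfrac1{N^{m}}=\tfrac{(-1)^{m-1}}{(m-1)!}\int_0^1 t^{N-1}(\ln t)^{m-1}\,dt$ applied to the innermost variable $n_p$ of $\zeta_k^\star(\{1\}_{p-1},m+1;z)$. Since each $\zeta_k^\star$ is a finite sum, interchanging summation and integration is unproblematic, and one gets the clean reduction
\[
\zeta_k^\star(\{1\}_{p-1},m+1;z)=\frac{(-1)^{m-1}}{(m-1)!}\int_0^1\frac{(\ln t)^{m-1}}{t}\,\zeta_k^\star(\{1\}_p;zt)\,dt,
\]
which lowers the last exponent from $m+1$ to $1$ at the price of replacing $z$ by $zt$.

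Next I would substitute this into $E_n$, interchange the finite $k$-sum with the $t$-integral, and apply Theorem~\ref{thm:Mneimneh-type-mhtss} (after dividing by $(x+y)^n$) to the inner average $\sum_k\binom nk q^k(1-q)^{n-k}\zeta_k^\star(\{1\}_p;zt)$; this is legitimate because $zt\in(-\infty,1]$ whenever $z\le1$ and $t\in(0,1)$. Abbreviating $\mathcal B(n,j,p):=\sum_{i=0}^{p-1}(-1)^i\frac{Y_i(n)}{i!}\frac{s(j,p-i)}{j!}$ for the inner brace, Theorem~\ref{thm:Mneimneh-type-mhtss} turns the inner average into $(-1)^{p-1}\sum_{j=1}^n\big[(qzt+1-q)^j-(1-q)^j\big]\mathcal B(n,j,p)$. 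Expanding the bracket by the binomial theorem removes the constant term $(1-q)^j$ (curing the apparent singularity of $(\ln t)^{m-1}/t$ at $t=0$), so only $t^l$ with $l\ge1$ survive, and $\int_0^1 t^{l-1}(\ln t)^{m-1}\,dt=(-1)^{m-1}(m-1)!/l^m$ collapses the integral. After the signs cancel this leaves the intermediate form
\[
E_n=(-1)^{p-1}\sum_{j=1}^n\mathcal B(n,j,p)\sum_{l=1}^j\binom jl (qz)^l(1-q)^{j-l}\frac1{l^m}.
\]

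The remaining task, and the real obstacle, is to recognize the inner one--parameter sum $\sum_{l}\binom jl w^l v^{j-l}l^{-m}$ (with $w=qz$, $v=1-q$) as the second Stirling--Bell block; concretely I must prove
\[
\sum_{l=1}^j\binom jl w^l v^{j-l}\frac1{l^m}=(-1)^{m-1}\sum_{l=1}^j v^{j-l}\big[(w+v)^l-v^l\big]\mathcal B(j,l,m).
\]
I would establish this by comparing coefficients of $w^a$, which reduces it to the purely combinatorial identity $\binom ja/a^m=(-1)^{m-1}\sum_{l=a}^j\binom la\,\mathcal B(j,l,m)$ for each $a\ge1$. That identity I would verify through the generating function $\sum_{m\ge0}\mathcal B(j,l,m)x^m=\binom{x+l-1}{l}/\binom{j+x}{j}$ (which follows from the defining series $\sum_i\frac{Y_i(n)}{i!}s^i=\prod_{i=1}^n i/(i-s)$ and the Stirling series $\frac1{l!}\sum_m s(l,m)x^m=\binom{x+l-1}{l}$), together with the binomial evaluation $\sum_{l=a}^j(-1)^{l-1}\binom la\binom ul=(-1)^{j-1}\binom ua\binom{u-a-1}{j-a}$; matching the two sides then reduces to the single rational function $\binom ja\,u/(a-u)$. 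Feeding this back, simplifying $v^{j-l}[(w+v)^l-v^l]=(y/(x+y))^j\big((1+xz/y)^l-1\big)$, and collecting the sign $(-1)^{p-1}(-1)^{m-1}=(-1)^{m+p}$, reproduces \eqref{equ:Mneimneh-type-mhtss-2}. The delicate point is precisely the coefficient-matching identity: everything else is bookkeeping, but that is the step where the Stirling numbers of the first kind and the Bell-type polynomials $Y_i$ genuinely interlock.
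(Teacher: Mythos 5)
Your proof is correct, and its skeleton coincides with the paper's: both derive Theorem \ref{thm:Mneimneh-type-mhtss-2} from Theorem \ref{thm:Mneimneh-type-mhtss} by integrating out the variable $z$ against a logarithmic kernel; your reduction $\zeta_k^\star(\{1\}_{p-1},m+1;z)=\frac{(-1)^{m-1}}{(m-1)!}\int_0^1 t^{-1}\ln^{m-1}(t)\,\zeta_k^\star(\{1\}_p;zt)\,dt$ is exactly the collapsed form of the paper's operator $\int_{z>t_1>\cdots>t_m>0}\frac{(\cdot)}{t_1\cdots t_m}\,dt_m\cdots dt_1$ applied after replacing $z$ by $t_m$. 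The genuine difference lies in how the result is identified with the second Stirling--Bell block $\mathcal{B}(j,l,m):=\sum_{h=0}^{m-1}(-1)^h\frac{Y_h(j)}{h!}\frac{s(l,m-h)}{l!}$. The paper invokes Proposition \ref{pro-one-miter}, whose proof evaluates the iterated integral as the nested sum $\sum_{j\geq j_1\geq\cdots\geq j_m\geq 1}\frac{(az+1)^{j_m}-1}{j_1\cdots j_m}$ and then converts the outer $(m-1)$-fold sum into Stirling/Bell data via the shifted-MHS reflection formula of Lemma \ref{lem-PMPLs2} combined with \eqref{equ-Str-mhs-1} and \eqref{equ-Str-mhss-1}. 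You instead expand $(qzt+1-q)^j$ by the binomial theorem, compute the elementary moment $\int_0^1 t^{l-1}\ln^{m-1}(t)\,dt=(-1)^{m-1}(m-1)!/l^m$, and reduce everything to the polynomial identity
\begin{equation*}
\sum_{l=1}^j\binom{j}{l}\frac{u^l}{l^m}=(-1)^{m-1}\sum_{l=1}^j\left((u+1)^l-1\right)\mathcal{B}(j,l,m),
\end{equation*}
which is precisely Proposition \ref{pro-one-miter} in disguise (set $u=az$), but which you prove by a different, purely formal method: coefficient extraction in $u$, the rational generating function $\sum_{m\geq 0}\mathcal{B}(j,l,m)x^m=\binom{x+l-1}{l}\big/\binom{j+x}{j}$, and the inductive binomial evaluation $\sum_{l=a}^{j}(-1)^{l-1}\binom{l}{a}\binom{x}{l}=(-1)^{j-1}\binom{x}{a}\binom{x-a-1}{j-a}$. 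I checked both auxiliary identities and the final rational-function match with $\binom{j}{a}x/(a-x)$; they hold, and your sign bookkeeping $(-1)^{p-1}(-1)^{m-1}=(-1)^{m+p}$ together with the rewriting $v^{j-l}\left((w+v)^l-v^l\right)=\left(\frac{y}{x+y}\right)^j\left(\left(1+\frac{xz}{y}\right)^l-1\right)$ (where $w=xz/(x+y)$, $v=y/(x+y)$) lands exactly on \eqref{equ:Mneimneh-type-mhtss-2}. The trade-off: the paper's route is shorter because it outsources the combinatorics to the external Lemma \ref{lem-PMPLs2} from \cite{XuZhao2020d}, whereas yours is self-contained, stays at the level of finite polynomial identities, and makes transparent that Theorem \ref{thm:Mneimneh-type-mhtss-2} is equivalent to an Euler-type identity for the binomial sum $\sum_{l}\binom{j}{l}u^l/l^m$.
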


\begin{thm} \label{thm:Mneimneh-type-mhtss-3} For any reals $x,y$ with $x/(x+y)\geq 0$ and $n,p,r\in \N,\ m\in \N_0$, we have
\begin{align}\label{equ:Mneimneh-type-mhtss-3}
&\sum_{k=0}^n x^ky^{n-k}\binom{n}{k} \zeta_k^\star(\{1\}_{p-1},m+2,\{1\}_{r-1})\nonumber\\&=(x+y)^n \sum_{n\geq n_1\geq \cdots\geq n_{p+m+r}\geq 1} \frac{\left(\frac{y}{x+y}\right)^{n_p} \left(\frac{x+y}{y}\right)^{n_{p+m+1}}\left(1-\left(\frac{y}{x+y}\right)^{n_{p+m+r}}\right)}{n_1\cdots n_{p+m+r}}.
\end{align}
\end{thm}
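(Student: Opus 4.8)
My plan is to prove \eqref{equ:Mneimneh-type-mhtss-3} by the ``integrals of natural logarithms'' method of \cite{PX2024}: I reduce the binomial summation to an iterated integral in which the whole $k$-dependence has been isolated, collapse the sum over $k$, and then re-expand into a nested sum. Write $u=y/(x+y)$ and $v=1/u=(x+y)/y$, so that the hypothesis $x/(x+y)\ge0$ reads $1-u\ge0$, and recall the elementary representation $H_N=\int_0^1\frac{1-t^N}{1-t}\,dt$ together with the collapse
\[
\sum_{k=0}^n\binom nk x^ky^{n-k}\left(1-t^k\right)=(x+y)^n-(xt+y)^n .
\]
This is the step that removes the summation index and introduces the linear form $xt+y$; it is the discrete analogue of the mechanism already visible in \eqref{equ-exp-z} and in Theorem~\ref{thm:Mneimneh-type-mhtss-2}.

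First I would express $\zeta_k^\star(\{1\}_{p-1},m+2,\{1\}_{r-1})$ as an iterated integral over the simplex $0<t_1<\cdots<t_{p+m+r}<1$, one variable for each of the $p+m+r$ units of weight, attaching a form $dt/(1-t)$ to every weight-one entry (the $p-1$ leading $1$'s, the leading slot of the block $m+2$, and the $r-1$ trailing $1$'s) and a form $dt/t$ to each of the $m+1$ excess units of the block, with the full $k$-dependence carried by a single factor $1-t_1^k$ on the innermost variable, encoding the binding constraint $n_1\le k$. Such a representation comes from iterating $H_N=\int_0^1\frac{1-t^N}{1-t}dt$ and $\frac1{N}=\int_0^1 t^{N-1}dt$; the prototypes $H_k=\int_0^1\frac{1-t^k}{1-t}dt$ and $H_k^{(2)}=\int\!\!\int_{0<a<b<1}\frac{1-a^k}{1-a}\,\frac{da\,db}{b}$ already display the block mechanism.

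Next I would insert this representation into the left-hand side, apply the collapse above (turning $1-t_1^k$ into $(x+y)^n-(xt_1+y)^n$ and eliminating $k$), and evaluate the iterated integral from the inside out. The weight-one integrations contribute their lower endpoint $t=0$, producing powers of $y$ that combine with the ambient powers of $x+y$ into the factor $\left(\frac{y}{x+y}\right)^{n_p}$ at the top $n_p$ of the block; each weight-raising $dt/t$ integration is evaluated after the monotone change of variables $c=xt+y$ (range $[y,x+y]$), using
\[
\int_y^{x+y}\frac{c^l-y^l}{c-y}\,dc=y^l\sum_{i=1}^l\frac{v^i-1}{i},
\]
so that it is precisely the upper endpoint $x+y$ which manufactures the factor $\left(\frac{x+y}{y}\right)^{n_{p+m+1}}$ at the bottom $n_{p+m+1}$ of the block, while the outermost integration produces the terminal factor $1-\left(\frac{y}{x+y}\right)^{n_{p+m+r}}$. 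Re-expanding the resulting rational expressions as geometric series over $n\ge n_1\ge\cdots\ge n_{p+m+r}\ge1$ should return exactly the stated right-hand side.

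The hard part will be the bookkeeping of where the $u$- and $v$-factors land: I must show that the $p-1$ leading indices stay free, that $u$ attaches to $n_p$ and $v$ to $n_{p+m+1}$, and that the terminal factor sits at the global bottom $n_{p+m+r}$ rather than inside the block. I expect to organize this by an induction on the three block sizes, seeded by the single-part case
\[
\sum_{k=0}^n\binom nk x^ky^{n-k}H_k^{(m+2)}=(x+y)^n\sum_{n\ge n_1\ge\cdots\ge n_{m+2}\ge1}\frac{u^{n_1}v^{n_{m+2}}\left(1-u^{n_{m+2}}\right)}{n_1\cdots n_{m+2}}
\]
(the instance $p=r=1$, equivalently the $z=1$ nested-sum form of Theorem~\ref{thm:Mneimneh-type-mhtss-2}), and using the incomplete-beta form $\sum_{k=\nu}^n\binom nk x^ky^{n-k}=(x+y)^n\,\nu\binom n\nu\int_0^{x/(x+y)}t^{\nu-1}(1-t)^{n-\nu}dt$ to attach the leading and trailing $1$'s one at a time; on the right the trailing step realizes the substitution $\left(1-u^{L}\right)\mapsto\sum_{M=1}^{L}\frac{1-u^M}{M}$, which grows the trailing block while leaving the $u$- and $v$-factors of the core untouched, and the leading step prepends a free top index. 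The hypothesis $x/(x+y)\ge0$ keeps $xt+y$ of constant sign on $[0,1]$, legitimizing every change of variables; since $n$ is finite all sums involved are finite, so no convergence question arises beyond this.
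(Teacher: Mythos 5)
Your proposal breaks down at its first substantive step: the simplex representation you posit for $\zeta_k^\star(\{1\}_{p-1},m+2,\{1\}_{r-1})$ does not exist in the form described. Product one-forms $dt/(1-t)$, $dt/t$ on $0<t_1<\cdots<t_{p+m+r}<1$ with a single truncation factor $1-t_1^k$ cannot encode a weakly decreasing, $k$-truncated index chain of depth $\geq 2$. Concretely, take $p=2$, $m=0$, $r=1$, so the entry is $\zeta_k^\star(1,2)=\sum_{k\geq n_1\geq n_2\geq 1}n_1^{-1}n_2^{-2}$. Your recipe (innermost variable carrying $1-t_1^k$ and the form $dt/(1-t)$ for the leading $1$, then the block's $dt/(1-t)$, then its excess $dt/t$) gives
\begin{equation*}
\int_{0<t_1<t_2<t_3<1}\frac{1-t_1^k}{1-t_1}\cdot\frac{1}{1-t_2}\cdot\frac{1}{t_3}\,dt_1\,dt_2\,dt_3
=\sum_{a=1}^{k}\frac{1}{a}\sum_{b>a}\frac{1}{b^2}
=\sum_{a=1}^{k}\frac{\zeta(2)-H_a^{(2)}}{a},
\end{equation*}
which is not $\zeta_k^\star(1,2)=\sum_{a=1}^k H_a^{(2)}/a$ (at $k=1$ it equals $\zeta(2)-1$, not $1$); interchanging the two outer forms instead makes the last integral diverge. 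The conceptual obstruction: integrating outward, a bare $dt/(1-t)$ sends $t^a$ to $\sum_{b>a}t^b/b$, i.e.\ it creates a new index that is strictly \emph{larger} than the previous one and unbounded, whereas the star sum needs the next index to be $\leq$ the previous one and $\leq k$. This is exactly why the paper cannot use such a representation either: even in the all-ones case $\int_0^x t^{n-1}\log^p(1-t)\,dt$ is \emph{not} proportional to a single star sum but carries correction terms (Lemma \ref{lem-x-mhs-Log}), and the correct ``factor on the innermost variable'' mechanism is \eqref{equ-MInteMhs}, where \emph{all} forms are $dt/t$ and act on the basis $(at+1)^l-1$ rather than on powers $t^l$.

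Your fallback induction does not repair this. The seed $p=r=1$ is fine (it is Theorem \ref{thm:Mneimneh-type-mhtss-2} at $z=1$), and your endpoint computation $\int_y^{x+y}\frac{c^l-y^l}{c-y}\,dc=y^l\sum_{i=1}^l\frac{v^i-1}{i}$ is correct --- it is the same mechanism as \eqref{equ-MInteMhs} and Proposition \ref{pro-two-miter}. But the trailing step cannot be run from the plain $r$-case identity: appending a trailing $1$ inserts a factor $H_\ell$ at the \emph{innermost} index $\ell$, and the plain identity, which only records the numbers $\zeta_k^\star(\cdots)$ as functions of $k$, gives no handle on that inner index (quasi-shuffle rewritings only trade it for products like $\zeta_k^\star(\cdots)H_k$, which the induction hypothesis also does not cover). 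What is needed is the identity for the $z$-deformed sums $\zeta_k^\star(\{1\}_{p-1},m+1;z)$, where $z$ marks precisely that index; this is the nested form \eqref{equ:Mneimneh-type-mhtss-2-anh} that the paper keeps alive, after which the entire trailing block is attached in one stroke by multiplying by $\log^{r-1}(1-z)/z$, integrating over $(0,1)$, and invoking \eqref{equ-Bell-Log} on the left and Proposition \ref{pro-two-miter} on the right --- no induction on $r$ at all. Likewise your incomplete-beta step for a leading $1$ reduces the $(p+1)$-case to $(x+y)^n\int_0^{x/(x+y)}\bigl(\text{the $p$-case at }(t,1-t)\bigr)\frac{dt}{t}$, but termwise this produces shifted denominators $\sum_{i=0}^{B-1}\frac{1-u^{A+i+1}}{A+i+1}$ with $A$ a difference of nested indices, and folding these back into the stated form requires exactly the partial-fraction identities of Lemma \ref{lem-PMPLs2}; that deferred work is the proof. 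As it stands, the repairs all lead back to the paper's actual route: Theorem \ref{thm:Mneimneh-type-mhtss} first, then the two integral transforms in $z$.
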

Here $s(n,k)$ and $Y_k(n)$ stand for the (unsigned) Stirling numbers of the first kind and Bell numbers (see Section \ref{Intr-STT-Bell}), respectively.

\begin{thm} \label{thm:Mneimneh-type-mhtss-4} For integer $m\in \N_0$ and real $y\in (0,1)$, if $r\in \N\setminus\{1\}$ then
\begin{align}\label{equ:Mneimneh-type-mhtss-4}
\ze^\star(m+2,\{1\}_{r-1})=\Li^\star_{\{1\}_{m+r+1}}\big(y,\{1\}_m,y^{-1},\{1\}_{r-1}\big)-\Li^\star_{\{1\}_{m+r+1}}\big(y,\{1\}_m,y^{-1},\{1\}_{r-2},y\big),
\end{align}
if $r=1$ then
\begin{align}\label{equ:Mneimneh-type-mhtss-4-2}
\ze^\star(m+2)=\Li^\star_{\{1\}_{m+2}}\big(y,\{1\}_m,y^{-1}\big)-\Li^\star_{\{1\}_{m+2}}\big(y,\{1\}_{m+1}\big).
\end{align}
\end{thm}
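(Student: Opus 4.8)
The plan is to obtain Theorem~\ref{thm:Mneimneh-type-mhtss-4} as a limiting case of Theorem~\ref{thm:Mneimneh-type-mhtss-3}. First I would specialize the free parameters in \eqref{equ:Mneimneh-type-mhtss-3}: take $p=1$ (so that $\{1\}_{p-1}=\emptyset$ and the harmonic-type sum on the left becomes $\ze^\star_k(m+2,\{1\}_{r-1})$, whose infinite limit is an admissible, hence convergent, multiple zeta star value because $m+2\ge 2$), and set $x=1-y$ with $y\in(0,1)$, so that $x+y=1$, $\tfrac{y}{x+y}=y$, $\tfrac{x+y}{y}=y^{-1}$, and $(x+y)^n=1$. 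One checks these choices satisfy the hypotheses of Theorem~\ref{thm:Mneimneh-type-mhtss-3}, since $x/(x+y)=1-y\ge 0$ and $x=1-y>0$, $y>0$. Dividing \eqref{equ:Mneimneh-type-mhtss-3} by $(x+y)^n=1$ and letting $n\to\infty$ then produces the desired identity, once both sides are shown to converge to the expected quantities.

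For the left-hand side I would invoke the Toeplitz (regular matrix summation) principle. The array $p_{n,k}:=\binom{n}{k}(1-y)^k y^{n-k}$ (for $0\le k\le n$, and $0$ otherwise) has nonnegative entries with row sums $\sum_k p_{n,k}=1$ and $p_{n,k}\to 0$ for each fixed $k$ as $n\to\infty$, so it is a regular summation matrix; equivalently, $p_{n,k}$ is the $\mathrm{Binomial}(n,1-y)$ mass function, whose mass escapes to infinity since $1-y>0$. Because $a_k:=\ze^\star_k(m+2,\{1\}_{r-1})$ is a bounded sequence converging to the admissible value $\ze^\star(m+2,\{1\}_{r-1})$, the Toeplitz principle yields $\lim_{n\to\infty}\sum_{k=0}^n p_{n,k}\,a_k=\ze^\star(m+2,\{1\}_{r-1})$.

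For the right-hand side, after the specialization the summand $\frac{y^{n_1-n_{m+2}}(1-y^{n_{m+r+1}})}{n_1\cdots n_{m+r+1}}$ is strictly positive for $y\in(0,1)$ (here $p+m+r=m+r+1$ and the marked indices become $n_p=n_1$, $n_{p+m+1}=n_{m+2}$, $n_{p+m+r}=n_{m+r+1}$), so the truncated sums increase monotonically in $n$ and converge to the full multiple series by monotone convergence. I would then split the factor $1-y^{n_{m+r+1}}$ and recognize the two resulting series as $\Li^\star_{\{1\}_{m+r+1}}(y,\{1\}_m,y^{-1},\{1\}_{r-1})$ and $\Li^\star_{\{1\}_{m+r+1}}(y,\{1\}_m,y^{-1},\{1\}_{r-2},y)$; for $r=1$ the last variable coincides with the $y^{-1}$ slot, so $y^{-1}\cdot y=1$ collapses the second series to $\Li^\star_{\{1\}_{m+2}}(y,\{1\}_{m+1})$, yielding \eqref{equ:Mneimneh-type-mhtss-4-2}.

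The main obstacle is the legitimacy of splitting the difference into two separate star polylogarithms, because the partial products of the argument vector equal $1$ from the $(m+2)$-th slot onward, so the naive convergence criterion $|x_1\cdots x_j|<1$ fails exactly at the boundary. I would settle this by a direct estimate showing each piece converges absolutely: summing the innermost $r-1$ unit-indices produces a factor $\ze^\star_{n_{m+2}}(\{1\}_{r-1})=O((\log n_{m+2})^{r-1})$, while the outer block $\sum_{n_1\ge\cdots\ge n_{m+1}\ge N} y^{n_1}/(n_1\cdots n_{m+1})=O\big(y^{N}(\log N)^{m}/N\big)$ decays exponentially; combined with the middle factor $y^{-N}/N$ the exponential cancels and the surviving terms are $O\big((\log N)^{m+r-1}/N^2\big)$, which is summable. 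This both justifies the split and confirms that each $\Li^\star$ appearing in Theorem~\ref{thm:Mneimneh-type-mhtss-4} is individually well defined.
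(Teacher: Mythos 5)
Your proposal is correct and follows essentially the same route as the paper: specialize Theorem \ref{thm:Mneimneh-type-mhtss-3} at $p=1$, $x+y=1$, $y\in(0,1)$, use the Toeplitz regular-summation principle to show the binomial average on the left converges to $\ze^\star(m+2,\{1\}_{r-1})$, and identify the $n\to\infty$ limit of the right-hand side with the difference of the two star multiple polylogarithms. The only difference is that you additionally verify, with correct estimates, that each $\Li^\star$ converges absolutely so that the difference may legitimately be split into two separate series — a boundary-convergence point the paper's proof passes over without comment.
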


The remainder of this paper is organized as follows. In Section \ref{Intr-STT-Bell}, we briefly introduce the (unsigned) Stirling number of the first kind and Bell polynomials, and give two relations of Stirling numbers, Bell numbers and multiple harmonic (star) sums. In Section \ref{sec-main-result-proof}, we first give three lemmas and prove two propositions. Then we apply these lemmas and propositions obtained to prove Theorems \ref{thm:Mneimneh-type-mhtss}-\ref{thm:Mneimneh-type-mhtss-3}. Further, we present some interesting corollaries and illustrative cases. Finally, we prove the Theorem \ref{thm:Mneimneh-type-mhtss-4} by applying the Toeplitz principle. In Section \ref{sec-conj}, we end this paper with a conjecture of Mneimneh-type binomial sums involving multiple harmonic star sums.

\section{Preliminaries}\label{Intr-STT-Bell}

\subsection{(unsigned) Stirling number of the first kind}\label{Intr-Stirling number}
Let $s(n,k)$ denote the (unsigned) Stirling number of the first kind, which is defined by \cite{CG1996,C1974}
\begin{align}\label{SN-Dgf}
n!x\left( {1 + x} \right)\left( {1 + \frac{x}{2}} \right) \cdots \left( {1 + \frac{x}{n}} \right) = \sum\limits_{k = 0}^n s(n+1,k+1) x^{k + 1},
\end{align}
with $s(n,k):=0$ if $n<k$, and $s(n,0)=s(0,k):=0,\ s(0,0):=1$, or equivalently, by the generating function:
\begin{align*}
{\log ^k}( {1 - x} ) = {\left( { - 1} \right)^k}k!\sum\limits_{n = 1}^\infty  {s(n,k)\frac{{{x^n}}}{{n!}}}\quad (x \in [- 1,1)).
\end{align*}
The Stirling numbers ${s\left( {n,k} \right)}$ of the first kind satisfy a recurrence relation in the form
\[s( {n,k}) = s({n - 1,k - 1}) + \left( {n - 1} \right)s( {n - 1,k})\quad (n,k \in \N).\]
Obviously, ${s\left( {n,k} \right)}$ can be expressed in terms of a linear combinations of products of harmonic numbers (see \eqref{defn-gener-harmonicnumber}). In particular,
\begin{align*}
& s( {n,1} ) = \left( {n - 1} \right)!,\\& s\left( {n,2} \right) = \left( {n - 1} \right)!{H_{n - 1}},\\& s( {n,3} ) = \frac{{\left( {n - 1} \right)!}}{2}\left[ {H_{n - 1}^2 - {H^{(2)} _{n - 1}}} \right],\\
&s( {n,4} ) = \frac{{\left( {n - 1} \right)!}}{6}\left[ {H_{n - 1}^3 - 3{H_{n - 1}}{H^{(2)} _{n - 1}} + 2{H^{(3)} _{n - 1}}} \right], \\
&s( {n,5} ) = \frac{{\left( {n - 1} \right)!}}{{24}}\left[ {H_{n - 1}^4 - 6{H^{(4)} _{n - 1}} - 6H_{n - 1}^2{H^{(2)} _{n - 1}} + 3(H^{(2)} _{n - 1})^2 + 8H_{n - 1}^{}{H^{(3)} _{n - 1}}} \right].
\end{align*}

In particular, we proved the following relation (\cite[Thm 2.5]{Xu2017})
\begin{align}\label{equ-Str-mhs-1}
s\left( {n,k} \right) = \left( {n - 1} \right)!{\zeta _{n - 1}}( {{{\left\{ 1 \right\}}_{k - 1}}})\quad (k,n\in \N).
\end{align}

\subsection{Bell polynomials}\label{Intr-Bell polynomials}

Define the \emph{exponential partial Bell polynomials} $B_{n,k}$ by
\[
\frac{1}{k!}\left(\sum_{n=1}^\infty x_n\frac{t^n}{n!}\right)^k
    =\sum_{n=k}^\infty B_{n,k}(x_1,x_2,\ldots,x_n)\frac{t^n}{n!}\,,\quad k=0,1,2,\ldots\,,
\]
and the \emph{exponential complete Bell polynomials} $Y_n$ by
\[
Y_n(x_1,x_2,\ldots,x_n):=\sum_{k=0}^nB_{n,k}(x_1,x_2,\ldots,x_n)
\]
(see \cite[Section 3.3]{C1974}). According to \cite[Eq. (2.44)]{Riordan58}, the complete Bell polynomials $Y_n$ satisfy the recurrence
\[
Y_0=1\,,\quad
Y_n(x_1,x_2,\ldots,x_n)=\sum_{j=0}^{n-1}\binom{n-1}{j}x_{n-j}Y_j(x_1,x_2,\ldots,x_j)
    \,,\quad n\geq1\,,
\]
from which, the first few polynomials can be obtained immediately:
\begin{align*}
&Y_0=1\,,\quad
    Y_1(x_1)=x_1\,,\quad
    Y_2(x_1,x_2)=x^2_1+x_2\,,\quad
    Y_3(x_1,x_2,x_3)=x^3_1+3x_1x_2+x_3\,,\\
&Y_4(x_1,x_2,x_3,x_4)=x^4_1+6x^2_1x_2+4x_1x_3+3x^2_2+x_4\,.
\end{align*}

Define the \emph{Bell number} ${Y_k}( n )$ by
\begin{align}
{Y_k}( n ):= {Y_k}( {{H_n},1!{H_n^{(2)}},2!{H_n^{(3)}}, \cdots ,( {k - 1})!{H_n^{(k)}}} ).
\end{align}
Clearly, the Bell number ${Y_k}( n )$ is a rational linear combination of products of harmonic numbers. We have
\begin{align*}
&{Y_1}( n ) = {H_n},\\&{Y_2}( n ) = H_n^2 + {H^{(2)} _n},\\&{Y_3}( n ) =  H_n^3+ 3{H_n}{H^{(2)} _n}+ 2{H^{(3)} _n},\\
&{Y_4}( n ) = H_n^4 + 8{H_n}{H^{(3)} _n} + 6H_n^2{H^{(2)} _n} + 3(H^{(2)} _n)^2 + 6{H^{(4)} _n},\\
&{Y_5}( n ) = H_n^5 + 10H_n^3{H^{(2)} _n} + 20H_n^2{H^{(3)}_n} + 15{H_n}({H^{(2)}_n})^2 + 30{H_n}{H^{(4)} _n}+ 20{H^{(2)} _n}{H^{(3)} _n} + 24{H^{(5)} _n}.
\end{align*}

In \cite[Eq. (2.9)]{Xu2017}, we showed
\begin{align}\label{equ-Str-mhss-1}
\zeta _n^ \star \left( {{{\{ 1\} }_m}} \right) = \frac{1}{{m!}}{Y_m}\left( n \right)\quad (n,m\in \N_0).
\end{align}

\section{Proofs of Main Results}\label{sec-main-result-proof}

\subsection{Lemmas and Propositions}

Next, we present some lemmas and propositions, which are useful in the development of our main theorems.

\begin{lem}(\cite[Thm. 2.9]{XYS2016} )\label{lem-Bell-Log} For $n\in \N$ and $p\in \N_0$, we have
\begin{align}\label{equ-Bell-Log}
\int\limits_0^1 {{t^{n - 1}}{{\log}^p}(1 - t)} dt = {( { - 1} )^p}p!\frac{\zeta_n^\star(\{1\}_p)}{n}.
\end{align}
\end{lem}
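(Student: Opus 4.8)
The plan is to evaluate the integral $\int_0^1 t^{n-1}\log^p(1-t)\,dt$ by expanding the logarithm through the generating function for Stirling numbers and then integrating term-by-term. First I would recall the stated identity $\log^p(1-t)=(-1)^p p!\sum_{m=1}^\infty s(m,p)\frac{t^m}{m!}$, valid for $t\in[0,1)$, which converts the power of the logarithm into a power series. Substituting this into the integrand gives
\begin{equation*}
\int_0^1 t^{n-1}\log^p(1-t)\,dt = (-1)^p p!\sum_{m=1}^\infty \frac{s(m,p)}{m!}\int_0^1 t^{n+m-1}\,dt = (-1)^p p!\sum_{m=1}^\infty \frac{s(m,p)}{m!\,(n+m)}.
\end{equation*}
The remaining task is to identify this sum with $\zeta_n^\star(\{1\}_p)/n$.

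The key step is to recognize the arithmetic content of the coefficient sum. Using the relation \eqref{equ-Str-mhs-1}, namely $s(m,p)=(m-1)!\,\zeta_{m-1}(\{1\}_{p-1})$, I would rewrite $\frac{s(m,p)}{m!}=\frac{\zeta_{m-1}(\{1\}_{p-1})}{m}$, so that the series becomes $\sum_{m\geq 1}\frac{\zeta_{m-1}(\{1\}_{p-1})}{m(n+m)}$. An alternative and likely cleaner route is to avoid re-summing an infinite series: I would instead integrate by parts $p$ times, or set up a recurrence in $p$ by differentiating under a parameter. Introducing $I_p(n):=\int_0^1 t^{n-1}\log^p(1-t)\,dt$ and integrating by parts (using $\int t^{n-1}\,dt$ and differentiating $\log^p(1-t)$) should relate $I_p(n)$ to a combination of $I_{p-1}$ at shifted or telescoped arguments, matching the defining recurrence of the multiple harmonic star sums $\zeta_n^\star(\{1\}_p)$.

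The main obstacle will be the bookkeeping that matches the iterated structure of integration by parts to the nested summation defining $\zeta_n^\star(\{1\}_p)$. Concretely, $\zeta_n^\star(\{1\}_p)=\sum_{n\geq n_1\geq\cdots\geq n_p\geq 1}\frac{1}{n_1\cdots n_p}$ carries the weak-inequality (star) structure, and I expect the cleanest proof to proceed by induction on $p$: the base case $p=0$ gives $\int_0^1 t^{n-1}\,dt=1/n$, consistent with $\zeta_n^\star(\emptyset)=1$, and the inductive step hinges on an identity of the form $\frac{1}{n}\zeta_n^\star(\{1\}_p)=\sum_{m=1}^n\frac{1}{m}\cdot\frac{1}{m}\zeta_{m}^\star(\{1\}_{p-1})$ or its integral analogue. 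Verifying that the boundary terms vanish (the factor $t^n$ kills the boundary at $t=0$, and $\log^p(1-t)$ is integrable at $t=1$ since $n\geq 1$) and that the recurrence closes up correctly is where the care is needed; once the recurrences on both sides are shown to agree with matching initial data, the identity follows by induction on $p$.
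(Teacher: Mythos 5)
First, a point of comparison: the paper does not prove this lemma at all --- it is imported wholesale from \cite[Thm. 2.9]{XYS2016} --- so there is no internal proof to measure your plan against; it must stand on its own. Your preferred route (induction on $p$ via integration by parts) is sound and does yield a complete proof, but two of the supporting claims, as written, are incorrect and would derail the argument if carried out literally. First, the boundary analysis: if you integrate $t^{n-1}$ to $t^n/n$, the boundary term at $t=1$ is $\lim_{t\to 1}\frac{t^n}{n}\log^p(1-t)$, which diverges for $p\geq 1$; integrability of $\log^p(1-t)$ is irrelevant to that term. The fix is to take the antiderivative $v=(t^n-1)/n$ instead, which vanishes to first order at $t=1$ and kills the logarithmic singularity, giving
\begin{align*}
I_p(n)=\frac{p}{n}\int_0^1\frac{t^n-1}{1-t}\log^{p-1}(1-t)\,dt
=-\frac{p}{n}\sum_{m=1}^{n}\int_0^1 t^{m-1}\log^{p-1}(1-t)\,dt
=-\frac{p}{n}\sum_{m=1}^{n}I_{p-1}(m),
\end{align*}
where the middle step uses $\frac{1-t^n}{1-t}=\sum_{m=1}^{n}t^{m-1}$. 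Second, the recurrence you wrote for the star sums, $\frac{1}{n}\zeta_n^\star(\{1\}_p)=\sum_{m=1}^n\frac{1}{m}\cdot\frac{1}{m}\zeta_m^\star(\{1\}_{p-1})$, is false: at $p=1$, $n=2$ it asserts $\frac{3}{4}=\frac{5}{4}$. What the induction actually needs is the identity obtained by summing over the outermost index $n_1=m$ in the definition \eqref{MHSs+MHSSs}, namely
\begin{align*}
\zeta_n^\star(\{1\}_p)=\sum_{m=1}^{n}\frac{1}{m}\,\zeta_m^\star(\{1\}_{p-1}).
\end{align*}

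With these two repairs your induction closes exactly as predicted: the base case is $I_0(n)=1/n$, and the inductive step gives
\begin{align*}
I_p(n)=-\frac{p}{n}\sum_{m=1}^{n}(-1)^{p-1}(p-1)!\,\frac{\zeta_m^\star(\{1\}_{p-1})}{m}
=(-1)^p p!\,\frac{\zeta_n^\star(\{1\}_p)}{n}.
\end{align*}
Your first route (expanding via the Stirling generating function and integrating term by term, which is legitimate by monotone convergence since all summands have fixed sign) also works in principle, but it leaves the nontrivial series evaluation $\sum_{m\geq 1}\frac{s(m,p)}{m!\,(n+m)}=\frac{\zeta_n^\star(\{1\}_p)}{n}$ still to be proved, so your instinct to switch to the induction was the right one.
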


\begin{lem}(\cite[Thm. 2.2]{Xu2017}) \label{lem-x-mhs-Log} For $n,p\in \N$ and $x\in(-\infty,1)$, we have
\begin{align}\label{equ-log-x-m}
&\int\limits_0^x {{t^{n - 1}}{{\log}^p}\left( {1 - t} \right)} dt =p!\frac{{{{\left( { - 1} \right)}^p}}}{n}\zeta _n^ \star ( {{{\{ 1\} }_p};x} )\nonumber\\
&\quad\quad\quad\quad\quad\quad\quad\quad\quad+\frac{1}{n}\sum\limits_{j =0}^{p- 1} {{{\left( { - 1} \right)}^{j}}j!\binom{p}{j}{{\log}^{p - j}}\left( {1 - x} \right)} \left( {\zeta _n^ \star( {{{\{ 1\} }_j};x}) - \zeta _n^ \star( {{{\{ 1\} }_j}} )} \right).
\end{align}
(Note that in \cite[Thm. 2.2]{Xu2017}, the range of values for $x$ is $x\in[-1,1)$, but in fact, the above equation also holds for $x\in (-\infty,-1]$.)
\end{lem}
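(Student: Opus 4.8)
The plan is to prove \eqref{equ-log-x-m} by induction on $p$, using a single integration by parts to lower the power of the logarithm and then collapsing the resulting sums of lower-order integrals via the basic recurrence for the truncated star sums. Throughout I adopt the boundary convention $\zeta_n^\star(\emptyset;x):=x^n$ (so that $\zeta_n^\star(\emptyset)=\zeta_n^\star(\emptyset;1)=1$), which is exactly what is needed to make the $j=0$ terms below fit the stated pattern.

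First I would set up the two structural facts that drive the argument. Writing $I_p^{(n)}(x):=\int_0^x t^{n-1}\log^p(1-t)\,dt$, integration by parts with $u=\log^p(1-t)$ and $dv=t^{n-1}\,dt$, together with the decomposition $\frac{t^n}{1-t}=\frac{1}{1-t}-\sum_{m=0}^{n-1}t^m$ and the evaluation $\int_0^x \frac{\log^{p-1}(1-t)}{1-t}\,dt=-\frac{1}{p}\log^p(1-x)$, yields the recurrence
\[
nI_p^{(n)}(x)=(x^n-1)\log^p(1-x)-p\sum_{N=1}^n I_{p-1}^{(N)}(x).
\]
The second fact is the telescoping identity $\sum_{N=1}^n \frac1N\,\zeta_N^\star(\{1\}_j;x)=\zeta_n^\star(\{1\}_{j+1};x)$ (and the same without the parameter), obtained by fixing the outermost index $n_1=N$ in the definition \eqref{defn-mhtss}; for $j=0$ it reads $\sum_{N=1}^n x^N/N=\zeta_n^\star(1;x)$ under the convention above.

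For the base case $p=0$ one has $I_0^{(n)}(x)=x^n/n$, which is precisely the right-hand side of \eqref{equ-log-x-m} (empty $j$-sum, leading term $\frac1n\zeta_n^\star(\emptyset;x)=x^n/n$). For the inductive step I would insert the formula for $I_{p-1}^{(N)}(x)$ into the recurrence, sum over $N$, and replace each $\sum_{N=1}^n\frac1N\zeta_N^\star(\{1\}_j;x)$ by $\zeta_n^\star(\{1\}_{j+1};x)$. After the shift $j\mapsto j+1$ the whole coefficient computation comes down to the elementary identity $p\,(j-1)!\binom{p-1}{j-1}=j!\binom{p}{j}$, which turns the weights carried over from level $p-1$ into exactly the weights $(-1)^j j!\binom{p}{j}$ of \eqref{equ-log-x-m}, while the collapsed leading contribution supplies $p!(-1)^p\zeta_n^\star(\{1\}_p;x)$. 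Finally the boundary term $(x^n-1)\log^p(1-x)$ is absorbed as the missing $j=0$ summand, since $x^n-1=\zeta_n^\star(\{1\}_0;x)-\zeta_n^\star(\{1\}_0)$, completing the induction.

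The work here is bookkeeping rather than any deep difficulty, and the single delicate point is the splitting $\frac{t^n}{1-t}=\frac{1}{1-t}-\sum_{m=0}^{n-1}t^m$: it is what isolates the apparently singular $\frac{1}{1-t}$ piece (which integrates to the boundary contribution $(x^n-1)\log^p(1-x)$) from the genuine lower-order integrals, and getting its sign and index range right is what makes the subsequent binomial matching work. I would also remark that the stated range $x\in(-\infty,1)$, including the extension to $(-\infty,-1]$ noted in the lemma, needs no separate treatment: the truncated star sums $\zeta_n^\star(\{1\}_j;x)$ are polynomials in $x$ and $\log(1-x)$ is real-analytic for $x<1$, so the integration-by-parts derivation is valid verbatim on all of $(-\infty,1)$ without any appeal to the convergence radius of a series expansion.
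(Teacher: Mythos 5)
Your proof is correct, but note that the paper does not actually prove this lemma at all: it is quoted from \cite[Thm.~2.2]{Xu2017}, where (as the parenthetical remark indicates) the statement is established only for $x\in[-1,1)$ --- the natural route there being the Stirling-number series expansion of $\log^p(1-t)$, which forces $|t|\le 1$, with the extension to $x\in(-\infty,-1]$ then justified by analytic continuation of both sides. Your induction via integration by parts is therefore a genuinely different and self-contained argument, and it buys something concrete: the recurrence $nI_p^{(n)}(x)=(x^n-1)\log^p(1-x)-p\sum_{N=1}^n I_{p-1}^{(N)}(x)$, the telescoping $\sum_{N=1}^n \frac1N\zeta_N^\star(\{1\}_j;x)=\zeta_n^\star(\{1\}_{j+1};x)$, and the coefficient identity $p\,(j-1)!\binom{p-1}{j-1}=j!\binom{p}{j}$ all hold verbatim for every $x<1$, so the extended range asserted without proof in the paper's note comes out directly, with no appeal to a convergence radius. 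I checked the delicate points: the splitting $\frac{t^n}{1-t}=\frac{1}{1-t}-\sum_{m=0}^{n-1}t^m$ and the evaluation $\int_0^x\frac{\log^{p-1}(1-t)}{1-t}\,dt=-\frac1p\log^p(1-x)$ are right, and the coefficient bookkeeping closes the induction. One point deserves emphasis rather than a passing mention: the paper defines only $\zeta_n^\star(\emptyset)=1$ and says nothing about $\zeta_n^\star(\emptyset;x)$, and the lemma's $j=0$ term is literally false under the reading $\zeta_n^\star(\emptyset;x)=1$; your convention $\zeta_n^\star(\emptyset;x):=x^n$ is exactly what makes the formula true (e.g.\ at $p=n=1$, where $\int_0^x\log(1-t)\,dt=(x-1)\log(1-x)-x$ forces the $j=0$ term to equal $(x^n-1)\log^p(1-x)$), and it is also the unique convention consistent with your $j=0$ telescoping step, so you were right to fix it at the outset.
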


\begin{lem}(\cite[Thm. 4.3]{XuZhao2020d})\label{lem-PMPLs2}
For ${\bfk}=(k_1,\ldots,k_r)\in \N^r$ and $l\in\N_0,n\in\N$, we have
\begin{align}\label{eq-PMPLs2}
&\sum_{n\geq n_1\geq n_2\geq \cdots\geq n_r>0}  \prod_{j=1}^r \frac{1}{(n_{j}+l)^{k_{j}}}
=(-1)^r \sum_{j=0}^r (-1)^j\ze^\star_{n+l}(\ora\bfk_{\hskip-2pt j}) \ze_l(\ola\bfk_{\hskip-2pt j+1}),
\end{align}
where $\ora\bfk_{\hskip-2pt j}:=(k_1,\ldots,k_j)$ and $\ola\bfk_{\hskip-2pt j}:=(k_r,\ldots,k_j)$ for all $1\le j\le r$.
\end{lem}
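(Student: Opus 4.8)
The plan is to derive Theorem~\ref{thm:Mneimneh-type-mhtss-4} from Theorem~\ref{thm:Mneimneh-type-mhtss-3} by specializing its free parameters and then letting $n\to\infty$ through a regular (Toeplitz) averaging argument. First I would put $p=1$ in \eqref{equ:Mneimneh-type-mhtss-3}, so that $\{1\}_{p-1}=\{1\}_0=\emptyset$ and the harmonic-type sum becomes $\zeta^\star_k(m+2,\{1\}_{r-1})$, and I would take the reals $x=1-y$ and $y=y$ with $y\in(0,1)$ fixed. Then $x+y=1$ and $x/(x+y)=1-y>0$, so the hypotheses of Theorem~\ref{thm:Mneimneh-type-mhtss-3} hold; moreover $(x+y)^n=1$, $\frac{y}{x+y}=y$ and $\frac{x+y}{y}=y^{-1}$. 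Since $p=1$ gives $n_p=n_1$, $n_{p+m+1}=n_{m+2}$, $n_{p+m+r}=n_{m+r+1}$ and $p+m+r=m+r+1$, identity \eqref{equ:Mneimneh-type-mhtss-3} collapses to the finite identity
\begin{equation*}
\sum_{k=0}^n\binom{n}{k}(1-y)^k y^{n-k}\,\zeta^\star_k(m+2,\{1\}_{r-1})
=\sum_{n\geq n_1\geq\cdots\geq n_{m+r+1}\geq1}\frac{y^{\,n_1-n_{m+2}}\bigl(1-y^{\,n_{m+r+1}}\bigr)}{n_1\cdots n_{m+r+1}},
\end{equation*}
valid for every $n\in\N$.

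On the left-hand side I would invoke the Toeplitz principle. The weights $\binom{n}{k}(1-y)^k y^{n-k}$ are the point probabilities of a $\mathrm{Binomial}(n,1-y)$ law: they are nonnegative, sum to $(x+y)^n=1$, and for each fixed $k$ tend to $0$ as $n\to\infty$ (the mass escapes to infinity because the mean $n(1-y)\to\infty$). Since $(m+2,\{1\}_{r-1})$ is admissible, the sequence $a_k:=\zeta^\star_k(m+2,\{1\}_{r-1})$ increases to the finite limit $\zeta^\star(m+2,\{1\}_{r-1})$ and is in particular bounded. The standard regularity argument (given $\varepsilon>0$ choose $K$ with $|a_k-L|<\varepsilon$ for $k>K$, split the sum at $K$, and use $\sum_{k\leq K}p_{n,k}\to0$ together with $\sum_k p_{n,k}=1$) then yields
\begin{equation*}
\lim_{n\to\infty}\sum_{k=0}^n\binom{n}{k}(1-y)^k y^{n-k}\,a_k=\zeta^\star(m+2,\{1\}_{r-1}).
\end{equation*}

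On the right-hand side every summand is nonnegative, since $n_1\geq n_{m+2}$ forces $y^{\,n_1-n_{m+2}}\in(0,1]$ and $1-y^{\,n_{m+r+1}}\in(0,1)$ for $y\in(0,1)$; hence the finite sums increase with $n$ to the corresponding infinite multiple series. Expanding the factor $1-y^{\,n_{m+r+1}}$ splits that series into a difference of two multiple polylogarithm star values, and the arguments are read off from the exponent of $y$ attached to each index: the first piece carries $+1$ on $n_1$ and $-1$ on $n_{m+2}$, giving $\Li^\star_{\{1\}_{m+r+1}}(y,\{1\}_m,y^{-1},\{1\}_{r-1})$, while the second piece carries the additional $+1$ on $n_{m+r+1}$, giving $\Li^\star_{\{1\}_{m+r+1}}(y,\{1\}_m,y^{-1},\{1\}_{r-2},y)$. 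For $r=1$ the index $n_{m+r+1}$ coincides with $n_{m+2}$, the exponents $-1$ and $+1$ cancel, and the second piece collapses to $\Li^\star_{\{1\}_{m+2}}(y,\{1\}_{m+1})$, producing \eqref{equ:Mneimneh-type-mhtss-4-2}; for $r\geq2$ one obtains \eqref{equ:Mneimneh-type-mhtss-4}. Equating the two limits then finishes the proof.

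The step I expect to be the main obstacle is the convergence bookkeeping on the right: the arguments of these star polylogarithms lie exactly on the boundary $|x_1\cdots x_j|=1$ for $j\geq m+2$, so the naive domain condition $|x_1\cdots x_j|<1$ fails and cannot be quoted. I would resolve this by a direct tail estimate for the first piece: bounding the inner indices $n_{m+3},\dots,n_{m+r+1}\leq n_{m+2}$ by $\zeta^\star_{n_{m+2}}(\{1\}_{r-1})=O\bigl((\log n_{m+2})^{r-1}\bigr)$, using the geometric factor $y^{\,n_1}$ to control the summation over the top indices $n_1\geq\cdots\geq n_{m+1}\geq n_{m+2}$, and combining with $y^{-n_{m+2}}$ to obtain an overall bound of order $\sum_N (\log N)^{r-1}N^{-(m+2)}$, which converges for all $m\in\N_0$. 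This estimate simultaneously shows that each split series converges (legitimizing the term-by-term splitting) and, via monotone convergence on the nonnegative finite sums, that the right-hand side converges to the infinite series. The verification of the Toeplitz regularity conditions for the binomial weights is a secondary, routine point.
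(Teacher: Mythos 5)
Your proposal does not address the assigned statement. The statement is Lemma~\ref{lem-PMPLs2}, a finite, purely combinatorial identity: for every fixed $n\in\N$ and $l\in\N_0$,
\begin{equation*}
\sum_{n\ge n_1\ge \cdots\ge n_r>0}\ \prod_{j=1}^r\frac{1}{(n_j+l)^{k_j}}
=(-1)^r\sum_{j=0}^r(-1)^j\,\ze^\star_{n+l}(\ora\bfk_{\hskip-2pt j})\,\ze_l(\ola\bfk_{\hskip-2pt j+1}).
\end{equation*}
What you have written is instead a proof of Theorem~\ref{thm:Mneimneh-type-mhtss-4}: you specialize $p=1$, $x+y=1$ in Theorem~\ref{thm:Mneimneh-type-mhtss-3} and let $n\to\infty$ via the Toeplitz principle --- which in fact retraces the paper's own argument for that theorem almost step for step, down to \eqref{equ-special-casep=1}. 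Nothing in your text bears on the lemma: there is no shift $m_j=n_j+l$, no decomposition of the summation chain at the threshold $l$, and limit/regularity machinery is beside the point for an identity between finite sums. The mismatch is also not repairable by relabeling, because in the paper's logical order the lemma sits strictly \emph{upstream} of everything you invoke: Lemma~\ref{lem-PMPLs2} produces \eqref{equ-emhn-Bell-Str}, which enters Proposition~\ref{pro-one-miter} and the proofs of Theorems~\ref{thm:Mneimneh-type-mhtss}--\ref{thm:Mneimneh-type-mhtss-3}, and Theorem~\ref{thm:Mneimneh-type-mhtss-4} is in turn deduced from Theorem~\ref{thm:Mneimneh-type-mhtss-3}. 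So presenting your argument as a proof of the lemma would be circular.

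For the record, the paper supplies no internal proof of this lemma (it is imported from \cite[Thm.~4.3]{XuZhao2020d}), and the expected argument is elementary: substituting $m_j=n_j+l$ rewrites the left-hand side as $\sum_{n+l\ge m_1\ge\cdots\ge m_r\ge l+1}\prod_{j=1}^r m_j^{-k_j}$; since the $m_j$ are weakly decreasing, every chain $n+l\ge m_1\ge\cdots\ge m_r\ge 1$ crosses the level $l$ at a unique index $j$ (namely $m_j>l\ge m_{j+1}$), which gives the triangular system $\ze^\star_{n+l}(\bfk)=\sum_{j=0}^r S_j\,\ze^\star_l(k_{j+1},\dots,k_r)$, where $S_j$ denotes the head sum over $n+l\ge m_1\ge\cdots\ge m_j\ge l+1$ and $S_r$ is the quantity sought. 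Inverting this system (by induction on $r$, or via the orthogonality $\sum_{j=0}^r(-1)^j\ze_l(k_j,\dots,k_1)\,\ze^\star_l(k_{j+1},\dots,k_r)=0$ for $r\ge1$, itself a consequence of the stuffle relations) yields exactly \eqref{eq-PMPLs2} with the strict, reversed sums $\ze_l(\ola\bfk_{\hskip-2pt j+1})$; the depth-one sanity check is $\sum_{m=l+1}^{n+l}m^{-k_1}=H^{(k_1)}_{n+l}-H^{(k_1)}_l=\ze^\star_{n+l}(k_1)-\ze_l(k_1)$. That is the argument your proposal needed to supply and did not.
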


\begin{pro}\label{pro-one-miter} For reals $a,z$ with $a\neq 0$ and positive integers $m,j$, we have
\begin{align}\label{equ-mi-fs-za}
&\int_{z>t_1>\cdots>t_m>0} \frac{(at_m+1)^j-1}{t_1\cdots t_m}dt_m\cdots dt_1\nonumber\\
&=(-1)^{m-1} \sum_{l=1}^j ((az+1)^l-1)\left\{\sum_{h=0}^{m-1} (-1)^h \frac{Y_h(j)}{h!} \frac{s(l,m-h)}{l!}\right\}.
\end{align}
\end{pro}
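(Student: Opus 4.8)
The plan is to collapse the $m$-fold iterated integral into a one-dimensional recursion whose coefficients are then matched to the Stirling--Bell expression by means of Lemma~\ref{lem-PMPLs2}. Write $g(t):=(at+1)^j-1$ and denote by $F_m(z)$ the left-hand side of \eqref{equ-mi-fs-za}. Peeling off the outermost variable $t_1\in(0,z)$, the inner integral over $t_1>t_2>\cdots>t_m>0$ is exactly $F_{m-1}(t_1)$, so that
\[
F_m(z)=\int_0^z\frac{F_{m-1}(t)}{t}\,dt,\qquad F_1(z)=\int_0^z\frac{g(t)}{t}\,dt .
\]
The elementary building block is the substitution $u=at+1$ (this is where $a\neq0$ is used) combined with $\tfrac{u^c-1}{u-1}=\sum_{i=0}^{c-1}u^i$, which yields, for every $c\in\N$,
\[
\int_0^z\frac{(at+1)^c-1}{t}\,dt=\sum_{l=1}^c\frac{(az+1)^l-1}{l}.
\]

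Feeding this building block into the recursion, a straightforward induction on $m$ (using $a^{(m)}_{l,j}=\tfrac1l\sum_{c=l}^j a^{(m-1)}_{c,j}$, obtained by integrating term by term) gives
\[
F_m(z)=\sum_{l=1}^j a^{(m)}_{l,j}\,\big((az+1)^l-1\big),\qquad
a^{(m)}_{l,j}=\frac1l\sum_{l\le c_1\le\cdots\le c_{m-1}\le j}\frac{1}{c_1\cdots c_{m-1}},
\]
with the empty sum/product conventions giving $a^{(1)}_{l,j}=1/l$. Thus the whole content of \eqref{equ-mi-fs-za} is reduced to the single identity $a^{(m)}_{l,j}=(-1)^{m-1}\sum_{h=0}^{m-1}(-1)^h\frac{Y_h(j)}{h!}\frac{s(l,m-h)}{l!}$, i.e. an explicit evaluation of the restricted multiple harmonic star sum appearing in $a^{(m)}_{l,j}$.

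To prove this last identity I would reindex $c_i=n_i+(l-1)$, which turns the constraints $l\le c_i\le j$ into $0<n_i\le j-l+1$ and transforms the restricted sum into $\sum_{\,j-l+1\ge n_1\ge\cdots\ge n_{m-1}>0}\prod_{i=1}^{m-1}(n_i+l-1)^{-1}$. Applying Lemma~\ref{lem-PMPLs2} with $\bfk=\{1\}_{m-1}$, shift parameter $l-1$ and $n=j-l+1$ (so that $n+(l-1)=j$) rewrites this as $(-1)^{m-1}\sum_{h=0}^{m-1}(-1)^h\zeta_j^\star(\{1\}_h)\,\zeta_{l-1}(\{1\}_{m-1-h})$. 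Finally I substitute $\zeta_j^\star(\{1\}_h)=Y_h(j)/h!$ from \eqref{equ-Str-mhss-1} and $\zeta_{l-1}(\{1\}_{m-1-h})=s(l,m-h)/(l-1)!$ from \eqref{equ-Str-mhs-1}, absorbing the leftover $1/l$ into $1/(l-1)!$ to produce $s(l,m-h)/l!$, which is exactly the right-hand side of \eqref{equ-mi-fs-za}.

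The reduction to the recursion and the building-block integral are routine. The main obstacle is the last step: recognizing the truncated-from-below star sum hidden in $a^{(m)}_{l,j}$ and finding the shift $c_i\mapsto n_i+(l-1)$ that brings it into the precise shape required by Lemma~\ref{lem-PMPLs2}; pinning down the shift together with the index $n=j-l+1$ is what makes the Stirling and Bell numbers reassemble into the stated closed form.
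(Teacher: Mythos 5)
Your proof is correct and follows essentially the same route as the paper's: both reduce the iterated integral to the nested sum $\sum_{j\ge j_1\ge\cdots\ge j_m\ge 1}\frac{(az+1)^{j_m}-1}{j_1\cdots j_m}$ via repeated use of the elementary integral $\int_0^w\frac{(at+1)^c-1}{t}\,dt=\sum_{l=1}^c\frac{(aw+1)^l-1}{l}$, then apply the shift $j_i\mapsto i_i+l-1$, Lemma~\ref{lem-PMPLs2} with $\bfk=\{1\}_{m-1}$, and the conversions \eqref{equ-Str-mhs-1}, \eqref{equ-Str-mhss-1}. The only cosmetic difference is that you peel off the outermost integration variable (giving the recursion $F_m(z)=\int_0^z F_{m-1}(t)\,dt/t$) whereas the paper peels off the innermost one, which leads to the same nested sum.
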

\begin{proof}
By a direct calculation, one obtains
\begin{align}
&\int_{z>t_1>\cdots>t_m>0} \frac{(at_m+1)^j-1}{t_1\cdots t_m}dt_m\cdots dt_1\nonumber\\
&=\int_{z>t_1>\cdots>t_{m-1}>0} \frac{1}{t_1\cdots t_{m-1}}\left\{\int_0^{t_{m-1}} \frac{(at_m+1)^j-1}{ t_m}dt_m\right\} dt_{m-1}\cdots dt_1\nonumber\\
&=\sum_{j_1=1}^j \frac1{j_1}\int_{z>t_1>\cdots>t_{m-1}>0} \frac{(at_{m-1}+1)^{j_1}-1}{t_1\cdots t_{m-1}} dt_{m-1}\cdots dt_1=\cdots\nonumber\\
&=\sum_{j_1=1}^j \frac1{j_1} \sum_{j_2=1}^{j_1} \frac1{j_2} \cdots \sum_{j_{m-1}=1}^{j_{m-2}} \frac{1}{j_{m-1}}\sum_{j_m=1}^{j_{m-1}} \frac{(az+1)^{j_m}-1}{j_m}\nonumber\\
&=\sum_{j\geq j_1\geq \cdots\geq j_m\geq 1} \frac{(az+1)^{j_m}-1}{j_1j_2\cdots j_m}\label{equ-MInteMhs}\\
&=\sum_{l=1}^j \frac{(az+1)^{l}-1}{l} \sum_{j\geq j_1\geq \cdots\geq j_{m-1}\geq l} \frac1{j_1\cdots j_{m-1}}\nonumber\\
&=\sum_{l=1}^j \frac{(az+1)^{l}-1}{l} \sum_{j-l+1\geq i_1\geq \cdots\geq i_{m-1}\geq 1} \frac1{(i_1+l-1)\cdots (i_{m-1}+l-1)}.\nonumber
\end{align}
In Lemma \ref{lem-PMPLs2}, replacing $n$ by $j-l+1$ and $l$ by $l-1$, and letting $r=m-1$, $(k_1,\ldots,k_r)=(\{1\}_{m-1})$, we easily obtain
\begin{align}\label{equ-emhn-Bell-Str}
&\sum_{j-l+1\geq i_1\geq \cdots\geq i_{m-1}\geq 1} \frac{1}{(i_1+l-1)\cdots (i_{m-1}+l-1)}\nonumber\\
&\quad\quad=(-1)^{m-1}\sum_{h=0}^{m-1} (-1)^h\ze^\star_j(\{1\}_h)\ze_{l-1}(\{1\}_{m-1-h}).
\end{align}
Applying \eqref{equ-Str-mhs-1}, \eqref{equ-Str-mhss-1} and \eqref{equ-emhn-Bell-Str} yields the desired evaluation.
\end{proof}

\begin{pro}\label{pro-two-miter} For positive integer $j$ and real $\alpha\neq 0$, we have
\begin{align}
\int_0^1 \log^m(1-t)(1+\alpha t)^{j-1}dt=(-1)^{m+1}m! \frac{(\alpha+1)^j}{\alpha j} \sum_{j\geq j_1\geq \cdots\geq j_m\geq1} \frac{(\alpha+1)^{-j_m}-1}{j_1\cdots j_m}.
\end{align}
\end{pro}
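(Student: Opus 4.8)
The plan is to massage the left-hand integral until it becomes, up to an explicit prefactor, the $m$-fold iterated integral already evaluated in \eqref{equ-MInteMhs}. First I would substitute $s=1-t$ and factor out the power: writing $\beta:=\alpha/(1+\alpha)$ (so implicitly $\alpha\neq -1$) and using $1+\alpha t=(1+\alpha)(1-\beta s)$, I get
\[
\int_0^1 \log^m(1-t)(1+\alpha t)^{j-1}\,dt
=(1+\alpha)^{j-1}\int_0^1 \log^m(s)\,(1-\beta s)^{j-1}\,ds .
\]
The crucial observation is that for $s\in(0,1)$ the logarithmic power admits the iterated-integral representation
\[
\log^m(s)=(-1)^m m!\int_{1>u_1>\cdots>u_m>s}\frac{du_1\cdots du_m}{u_1\cdots u_m},
\]
which follows by iterating $-\log s=\int_s^1 du/u$ and is checked immediately by induction on $m$.

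Next I would insert this representation and exchange the order of integration. The combined domain $\{0<s<1,\ 1>u_1>\cdots>u_m>s\}$ equals $\{1>u_1>\cdots>u_m>0,\ 0<s<u_m\}$, so after Fubini the innermost integral is elementary,
\[
\int_0^{u_m}(1-\beta s)^{j-1}\,ds=\frac{1-(1-\beta u_m)^j}{\beta j}.
\]
Collecting the prefactors via $(1+\alpha)^{j-1}/\beta=(1+\alpha)^j/\alpha$ then yields
\[
\int_0^1 \log^m(1-t)(1+\alpha t)^{j-1}\,dt
=(-1)^m m!\,\frac{(1+\alpha)^j}{\alpha j}\int_{1>u_1>\cdots>u_m>0}\frac{1-(1-\beta u_m)^j}{u_1\cdots u_m}\,du_1\cdots du_m .
\]

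Finally I would recognize the remaining simplex integral as an instance of \eqref{equ-MInteMhs} from Proposition \ref{pro-one-miter}. Writing $1-(1-\beta u_m)^j=-\big((-\beta\,u_m+1)^j-1\big)$ and applying \eqref{equ-MInteMhs} with $a=-\beta$ and $z=1$, together with the identity $1-\beta=(1+\alpha)^{-1}$, turns the integral into $-\sum_{j\geq j_1\geq\cdots\geq j_m\geq1}\big((1+\alpha)^{-j_m}-1\big)/(j_1\cdots j_m)$. The extra sign merges with $(-1)^m$ to produce the claimed $(-1)^{m+1}$, which completes the identification.

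The only real subtlety is analytic rather than algebraic: one must justify the Fubini interchange in the presence of the logarithmic singularity of $\log^m(s)$ at $s=0$. This is harmless because $\log^m(s)$ is absolutely integrable on $(0,1)$ while $(1-\beta s)^{j-1}$ is a bounded polynomial, so the iterated double integral is absolutely convergent and the exchange of order is legitimate. As a sanity check, the case $m=1$ can instead be obtained by a single integration by parts against the antiderivative $\big((1+\alpha t)^j-(1+\alpha)^j\big)/(\alpha j)$, which vanishes at $t=1$ and hence kills the boundary term, and reduces directly to \eqref{equ-MInteMhs} with $m=1$. I expect no genuine obstacle beyond this interchange, the remaining work being sign bookkeeping.
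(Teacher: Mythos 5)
Your proposal is correct and is essentially the paper's own proof run in reverse: the paper starts from the $z=1$ case of Proposition \ref{pro-one-miter}, peels off the outer $m-1$ integrations into a $\log^{m-1}$ factor, integrates by parts and substitutes $t\to 1-t$, whereas you start from the target integral, substitute $s=1-t$, expand $\log^m(s)$ as a simplex integral and apply Fubini --- the same chain of identities, with Fubini playing the role of the paper's integration by parts. Both arguments hinge on the identity \eqref{equ-MInteMhs} at $z=1$ together with the same reparametrization $a=-\alpha/(1+\alpha)$ (equivalently $\alpha=-a/(a+1)$), and both, like the statement itself, implicitly require $\alpha\neq -1$.
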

\begin{proof}
In \eqref{equ-mi-fs-za}, letting $z=1$ and noting the fact that
\begin{align*}
\int_{1>t_1>\cdots>t_{m-1}>t} \frac{1}{t_1\cdots t_{m-1}}dt_{m-1}\cdots dt_1=\frac{(-1)^{m-1}}{(m-1)!}\log^{m-1}(t),
\end{align*}
the left hand side of \eqref{equ-mi-fs-za} can be rewritten as
\begin{align*}
&\int_{1>t_1>\cdots>t_m>0} \frac{(at_m+1)^j-1}{t_1\cdots t_m}dt_m\cdots dt_1\\
&=\int_0^1 \frac{(at+1)^j-1}{t} \left\{\int_{1>t_1>\cdots>t_{m-1}>t} \frac{1}{t_1\cdots t_{m-1}}dt_{m-1}\cdots dt_1 \right\}dt\\
&=\frac{(-1)^{m-1}}{(m-1)!} \int_0^1 \frac{\log^{m-1}(t)\Big((at+1)^j-1\Big)}{t}dt\\
&=\frac{(-1)^{m}}{m!} aj \int_0^1 \log^m(t)(at+1)^{j-1}dt\quad\quad (t\rightarrow 1-t)\\
&=\frac{(-1)^{m}}{m!} aj \int_0^1 \log^m(1-t)(a(1-t)+1)^{j-1}dt\\
&=\frac{(-1)^{m}}{m!}a(a+1)^{j-1} j \int_0^1 \log^m(1-t)\left(1-\frac{a}{a+1}\right)^{j-1}dt.
\end{align*}
Using \eqref{equ-MInteMhs} with $z=1$ gives
\begin{align*}
&\int_0^1 \log^m(1-t)\left(1-\frac{a}{a+1}t\right)^{j-1}dt\\
&=(-1)^m m!\frac{1}{a(a+1)^{j-1}j} \sum_{j\geq j_1\geq \cdots\geq j_m\geq 1} \frac{(a+1)^{j_m}-1}{j_1j_2\cdots j_m}.
\end{align*}
Finally, setting $\alpha=-a/(a+1)\ (a\neq0,-1)$, by an elementary calculation, we obtain the desired evaluation.
\end{proof}

\subsection{Proof of Theorems \ref{thm:Mneimneh-type-mhtss}, \ref{thm:Mneimneh-type-mhtss-2} and \ref{thm:Mneimneh-type-mhtss-3}}

From Lemmas \ref{lem-Bell-Log} and \ref{lem-x-mhs-Log}, by a simple calculation, we arrive at the result
\begin{align}
\ze^\star_n(\{1\}_p;z)-\ze^\star_n(\{1\}_p)=\frac{n}{p!} \sum_{j=0}^p (-1)^j \binom{p}{j} \log^{p-j}(1-z) \int_1^z t^{n-1} \log^j(1-t)dt.
\end{align}
Hence,
\begin{align*}
&\sum_{k=0}^n x^ky^{n-k}\binom{n}{k} \left(\zeta_k^\star(\{1\}_p;z)-\zeta_k^\star(\{1\}_p\right)\\
&=\sum_{k=0}^n x^ky^{n-k}\binom{n}{k} k \sum_{j=0}^p \frac{(-1)^j}{j!} \frac{\log^{p-j}(1-z)}{(p-j)!} \int_1^z t^{k-1} \log^j(1-t)dt\\
&=nx \sum_{j=0}^p \frac{(-1)^j}{j!} \frac{\log^{p-j}(1-z)}{(p-j)!}  \int_1^z \log^j(1-t) (xt+y)^{n-1}dt \quad\quad\quad(\text{letting}\ u=tx+y)\\
&=n \sum_{j=0}^p \frac{(-1)^j}{j!} \frac{\log^{p-j}(1-z)}{(p-j)!}  \int_{x+y}^{xz+y} u^{n-1}\log^j\Big(\frac{x+y-u}{x}\Big) du \\
&=n \sum_{j=0}^p \frac{(-1)^j}{j!} \frac{\log^{p-j}(1-z)}{(p-j)!}  \int_{x+y}^{xz+y} u^{n-1} \left\{\log\Big(\frac{x+y}{x}\Big)+\log\Big(1-\frac{u}{x+y}\Big) \right\}^j du \\
&=n \sum_{j=0}^p \frac{(-1)^j}{j!} \frac{\log^{p-j}(1-z)}{(p-j)!} \sum_{i=0}^j \binom{j}{i} \log^{j-i} \Big(\frac{x+y}{x}\Big) \int_{x+y}^{xz+y} u^{n-1}\log^i\Big(1-\frac{u}{x+y}\Big)du\\
&=n(x+y)^n \sum_{j=0}^p \frac{(-1)^j}{j!} \frac{\log^{p-j}(1-z)}{(p-j)!} \sum_{i=0}^j \binom{j}{i} \log^{j-i} \Big(\frac{x+y}{x}\Big) \int_{1}^{(xz+y)(x+y)^{-1}} v^{n-1}\log^i(1-v)dv.
\end{align*}
Applying \eqref{equ-Bell-Log} and \eqref{equ-log-x-m} gives
\begin{align*}
&\sum_{k=0}^n x^ky^{n-k}\binom{n}{k} \left(\zeta_k^\star(\{1\}_p;z)-\zeta_k^\star(\{1\}_p\right)\\
&=(x+y)^n \sum_{j=0}^p \frac{(-1)^j}{j!} \frac{\log^{p-j}(1-z)}{(p-j)!} \sum_{i=0}^j \binom{j}{i} \log^{j-i} \Big(\frac{x+y}{x}\Big) \\&\quad\quad\quad\quad\quad\times\sum_{l=0}^i (-1)^l l! \binom{i}{l} \log^{i-l}\Big(\frac{(1-z)x}{x+y}\Big) \left(\ze^\star_n\Big(\{1\}_l;\frac{xz+y}{x+y}\Big)-\ze^\star_n(\{1\}_l)\right)\\
&=(x+y)^n \sum_{p\geq j\geq i\geq l\geq 0} (-1)^{j+l} \frac{\log^{p-j}(1-z)\log^{j-i} \Big(\frac{x+y}{x}\Big) \log^{i-l}\Big(\frac{(1-z)x}{x+y}\Big)}{(p-j)!(j-i)!(i-l)!} \\&\quad\quad\quad\quad\quad\quad\quad\quad\quad\times\left(\ze^\star_n\Big(\{1\}_l;\frac{xz+y}{x+y}\Big)-\ze^\star_n(\{1\}_l)\right)\\
&=(x+y)^n \sum_{l+h+g+q=p,\atop l,h,g,q\geq 0} (-1)^{h+g} \frac{\log^{q}(1-z)\log^{g} \Big(\frac{x+y}{x}\Big) \log^{h}\Big(\frac{(1-z)x}{x+y}\Big)}{q!g!h!}
\\&\quad\quad\quad\quad\quad\quad\quad\quad\quad\times\left(\ze^\star_n\Big(\{1\}_l;\frac{xz+y}{x+y}\Big)-\ze^\star_n(\{1\}_l)\right)\\
&=(x+y)^n \sum_{l=0}^p \left(\ze^\star_n\Big(\{1\}_l;\frac{xz+y}{x+y}\Big)-\ze^\star_n(\{1\}_l)\right) \\&\quad\quad\quad\quad\quad\quad\quad\times\sum_{h+g+q=p-l,\atop h,g,q\geq 0} (-1)^{h+g} \frac{\log^{q}(1-z)\log^{g} \Big(\frac{x+y}{x}\Big) \log^{h}\Big(\frac{(1-z)x}{x+y}\Big)}{q!g!h!}\\
&=(x+y)^n \sum_{l=0}^p \left(\ze^\star_n\Big(\{1\}_l;\frac{xz+y}{x+y}\Big)-\ze^\star_n(\{1\}_l)\right) \\&\quad\quad\quad\quad\quad\quad\quad\times\frac{1}{(p-l)!} \left\{\log(1-z)+\log\Big(\frac{x}{x+y}\Big)+\log\Big(\frac{x+y}{(1-z)x}\Big)\right\}^{p-l}\\
&=(x+y)^n \left(\ze^\star_n\Big(\{1\}_p;\frac{xz+y}{x+y}\Big)-\ze^\star_n(\{1\}_p)\right).
\end{align*}
Clearly, if setting $z=0$ then
\begin{align*}
&\sum_{k=0}^n x^ky^{n-k}\binom{n}{k} \zeta_k^\star(\{1\}_p)=(x+y)^n \left(\ze^\star_n(\{1\}_p)-\ze^\star_n\Big(\{1\}_p;\frac{y}{x+y}\Big)\right).
\end{align*}
Therefore, we have
\begin{align}\label{equ-mhn-1-z-mhss-2}
&\sum_{k=0}^n x^ky^{n-k}\binom{n}{k} \zeta_k^\star(\{1\}_p;z)\nonumber\\&=(x+y)^n \left(\ze^\star_n\Big(\{1\}_p;\frac{xz+y}{x+y}\Big)-\ze^\star_n\Big(\{1\}_p;\frac{y}{x+y}\Big)\right)\nonumber\\
&=(x+y)^n \sum_{n\geq n_1\geq \cdots\geq n_p\geq 1 } \frac{\Big(\frac{xz+y}{x+y}\Big)^{n_p}-\Big(\frac{y}{x+y}\Big)^{n_p}}{n_1\cdots n_p}\\
&=(x+y)^n \sum_{j=1}^n \left(\Big(\frac{xz+y}{x+y}\Big)^{j}-\Big(\frac{y}{x+y}\Big)^{j} \right) \sum_{n\geq n_1\geq \cdots\geq n_{p-1}\geq j } \frac{1}{n_1\cdots n_{p-1}}.\nonumber
\end{align}
Finally, applying \eqref{equ-Str-mhs-1}, \eqref{equ-Str-mhss-1} and \eqref{equ-emhn-Bell-Str} yields the desired evaluation. Thus, this concludes
the proof of Theorem \ref{thm:Mneimneh-type-mhtss}.

Replacing $z$ by $t_m$ in \eqref{equ:Mneimneh-type-mhtss}, then applying $\int_{z>t_1>\cdots>t_m>0} \frac{(\cdot)}{t_1\cdots t_m}dt_m\cdots dt_1$ to it and using Proposition \ref{pro-one-miter}, we deduce
\begin{align*}
&\sum_{k=0}^n x^ky^{n-k}\binom{n}{k} \zeta_k^\star(\{1\}_{p-1},m+1;z)\nonumber\\&=(-1)^{p-1}(x+y)^n \sum_{j=1}^n \left\{\sum_{i=0}^{p-1} (-1)^i \frac{Y_i(n)}{i!} \frac{s(j,p-i)}{j!}\right\} (x+y)^{-j}y^{j} \\&\quad\quad\quad\quad\quad\quad\quad\times \int_{z>t_1>\cdots>t_m>0} \frac{\left(\frac{x}{y}t_m+1\right)^j-1}{t_1\cdots t_m}dt_m\cdots dt_1\\
&=(-1)^{p+m}(x+y)^n \sum_{j=1}^n \left\{\sum_{i=0}^{p-1} (-1)^i \frac{Y_i(n)}{i!} \frac{s(j,p-i)}{j!}\right\} (x+y)^{-j}y^{j} \\&\quad\quad\quad\quad\quad\quad\quad\times\sum_{l=1}^j \left(\Big(\frac{xz}{y}+1\Big)^l-1\right)\left\{\sum_{h=0}^{m-1} (-1)^h \frac{Y_h(j)}{h!} \frac{s(l,m-h)}{l!}\right\}.
\end{align*}
Thus, this completes the proof of Theorem \ref{thm:Mneimneh-type-mhtss-2}.

On the other hand, applying $\int_{z>t_1>\cdots>t_m>0} \frac{(\cdot)}{t_1\cdots t_m}dt_m\cdots dt_1$ to \eqref{equ-mhn-1-z-mhss-2} and using \eqref{equ-MInteMhs}, we obtain another form of \eqref{equ:Mneimneh-type-mhtss-2} ($p\in \N,\ m\in \N_0$)
\begin{align}\label{equ:Mneimneh-type-mhtss-2-anh}
&\sum_{k=0}^n x^ky^{n-k}\binom{n}{k} \zeta_k^\star(\{1\}_{p-1},m+1;z)\nonumber\\&
=(x+y)^n \sum_{n\geq n_1\geq \cdots \geq n_{p+m}\geq 1} \frac{(x+y)^{-n_p}y^{n_p-n_{p+m}}\left((xz+y)^{n_{p+m}}-y^{n_{p+m}}\right)}{n_1\cdots n_{p+m}}.
\end{align}
Multiplying \eqref{equ:Mneimneh-type-mhtss-2-anh} by $\log^{r-1}(1-z)/z$ and integrating over the interval (0,1), and applying \eqref{equ-Bell-Log}, we have
\begin{align}\label{equ:Mneimneh-type-mhtss-2-anh}
&(-1)^{r-1}(r-1)!\sum_{k=0}^n x^ky^{n-k}\binom{n}{k} \zeta_k^\star(\{1\}_{p-1},m+1;z)\nonumber\\&
=(x+y)^n \sum_{n\geq n_1\geq \cdots \geq n_{p+m}\geq 1} \frac{(x+y)^{-n_p}y^{n_p}}{n_1\cdots n_{p+m}}\int_0^1 \frac{\log^{r-1}(1-z)\left(\Big(\frac{xz}{y}+1\Big)^{n_{p+m}}-1\right)}{z}dz\nonumber\\
&=(x+y)^n \sum_{n\geq n_1\geq \cdots \geq n_{p+m}\geq 1} \frac{(x+y)^{-n_p}y^{n_p}}{n_1\cdots n_{p+m}} \sum_{n_{p+m+1}=1}^{n_{p+m}} \frac{x}{y} \int_0^1 \log^{r-1}(1-z)\Big(\frac{xz}{y}+1\Big)^{n_{p+m+1}-1}dz.
\end{align}
Finally, using Proposition \ref{pro-two-miter}, by a straight-forward calculation, we obtain \eqref{equ:Mneimneh-type-mhtss-3}. Thus, this concludes the proof of the Theorem \ref{thm:Mneimneh-type-mhtss-3}.

\subsection{Corollaries and Examples}

From Theorem \ref{thm:Mneimneh-type-mhtss-2}, letting $m=1$ or $p=1$ give the following corollaries.
\begin{cor}\label{cor:Mneimneh-type-mhtss-2-cor-1} For any reals $x,y$ and $z\in (-\infty,1]$ with $x/(x+y)\geq 0$ and $n,p\in \N$, we have
\begin{align}\label{equ:Mneimneh-type-mhtss-2-cor-1}
&\sum_{k=0}^n x^ky^{n-k}\binom{n}{k} \zeta_k^\star(\{1\}_{p-1},2;z)\nonumber\\&=(-1)^{p-1}(x+y)^n \sum_{n\geq j\geq l\geq 1} \frac1{l}\left(\frac{y}{x+y}\right)^j \left( \left(1+\frac{xz}{y}\right)^l-1\right)\left\{\sum_{i=0}^{p-1} (-1)^i \frac{Y_i(n)}{i!} \frac{s(j,p-i)}{j!}\right\}.
\end{align}
\end{cor}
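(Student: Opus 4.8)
The plan is to derive this corollary as a direct specialization of Theorem \ref{thm:Mneimneh-type-mhtss-2}, taking the parameter $m=1$. First I would set $m=1$ in \eqref{equ:Mneimneh-type-mhtss-2}. On the left-hand side this turns $\zeta_k^\star(\{1\}_{p-1},m+1;z)$ into $\zeta_k^\star(\{1\}_{p-1},2;z)$, matching the left side of \eqref{equ:Mneimneh-type-mhtss-2-cor-1}, while the sign prefactor becomes $(-1)^{m+p}=(-1)^{p+1}=(-1)^{p-1}$. The outer factor $\left\{\sum_{i=0}^{p-1} (-1)^i \frac{Y_i(n)}{i!} \frac{s(j,p-i)}{j!}\right\}$, the double summation range $n\geq j\geq l\geq 1$, and the factor $\left(\frac{y}{x+y}\right)^j\left(\left(1+\frac{xz}{y}\right)^l-1\right)$ are all independent of $m$, so they carry over verbatim.

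The key step is to simplify the inner bracket $\left\{\sum_{h=0}^{m-1} (-1)^h \frac{Y_h(j)}{h!} \frac{s(l,m-h)}{l!}\right\}$ under the choice $m=1$. Here the summation range forces $h=0$, leaving only the single term $\frac{Y_0(j)}{0!}\cdot\frac{s(l,1)}{l!}$. I would then invoke the two elementary values recorded in Section \ref{Intr-STT-Bell}: the complete Bell polynomial satisfies $Y_0=1$, and the Stirling number satisfies $s(l,1)=(l-1)!$. Consequently $\frac{s(l,1)}{l!}=\frac{(l-1)!}{l!}=\frac{1}{l}$, so the entire inner bracket collapses to $\frac{1}{l}$.

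Substituting this simplification back into \eqref{equ:Mneimneh-type-mhtss-2} produces exactly \eqref{equ:Mneimneh-type-mhtss-2-cor-1}, which completes the argument. Because this is a pure specialization of an already-proved theorem, there is no genuine technical obstacle; the only point demanding care is the bookkeeping of the degenerate $h$-sum, namely verifying that the range $0\le h\le m-1$ contributes a single nonzero term at $m=1$ and that this term evaluates to $1/l$ via the stated values of $Y_0$ and $s(l,1)$.
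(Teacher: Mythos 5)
Your proposal is correct and matches the paper's own derivation exactly: the paper obtains this corollary precisely by setting $m=1$ in Theorem \ref{thm:Mneimneh-type-mhtss-2}, and your simplification of the inner bracket to $\frac{1}{l}$ via $Y_0(j)=1$ and $s(l,1)=(l-1)!$, together with the sign identity $(-1)^{m+p}=(-1)^{p-1}$, is the whole content of that specialization.
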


\begin{cor}\label{cor:Mneimneh-type-mhtss-2-cor-two} For any reals $x,y$ and $z\in (-\infty,1]$ with $x/(x+y)\geq 0$ and $n,m\in \N$, we have
\begin{align}\label{equ:Mneimneh-type-mhtss-2-cor-2}
&\sum_{k=0}^n x^ky^{n-k}\binom{n}{k} \zeta_k^\star(m+1;z)\nonumber\\&=(-1)^{m-1}(x+y)^n \sum_{n\geq j\geq l\geq 1} \frac1{j}\left(\frac{y}{x+y}\right)^j \left( \left(1+\frac{xz}{y}\right)^l-1\right)\left\{\sum_{h=0}^{m-1} (-1)^h \frac{Y_h(j)}{h!} \frac{s(l,m-h)}{l!}\right\}.
\end{align}
\end{cor}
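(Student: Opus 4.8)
The plan is to obtain Corollary \ref{cor:Mneimneh-type-mhtss-2-cor-two} as the specialization $p=1$ of Theorem \ref{thm:Mneimneh-type-mhtss-2}, which has already been proved in the excerpt. Since that theorem holds for every $p\in\N$, I may simply set $p=1$ in \eqref{equ:Mneimneh-type-mhtss-2} and then simplify both sides, so the entire argument reduces to careful substitution and two standard evaluations from Section \ref{Intr-STT-Bell}.

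On the left-hand side, putting $p=1$ turns the argument $(\{1\}_{p-1},m+1)$ into $(\{1\}_0,m+1)=(m+1)$, because $\{1\}_0$ is the empty sequence; hence $\zeta_k^\star(\{1\}_{p-1},m+1;z)$ collapses to $\zeta_k^\star(m+1;z)$, matching the left side of \eqref{equ:Mneimneh-type-mhtss-2-cor-2}. On the right-hand side, the prefactor sign becomes $(-1)^{m+p}=(-1)^{m+1}=(-1)^{m-1}$, while the outer double sum $\sum_{n\geq j\geq l\geq 1}$, the factor $(y/(x+y))^j\big((1+xz/y)^l-1\big)$, and the second bracketed sum $\sum_{h=0}^{m-1}(-1)^h Y_h(j)\,s(l,m-h)/(h!\,l!)$ are all independent of $p$ and therefore carry over unchanged.

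The only genuine computation is the collapse of the first bracketed sum. With $p=1$ the inner sum $\sum_{i=0}^{p-1}(-1)^i Y_i(n)\,s(j,p-i)/(i!\,j!)$ reduces to its single term $i=0$, namely $Y_0(n)\,s(j,1)/(0!\,j!)$. I would then invoke the Bell-polynomial normalization $Y_0=1$, so that $Y_0(n)=1$, together with the Stirling evaluation $s(j,1)=(j-1)!$ recorded in Section \ref{Intr-Stirling number}. Combining these gives $Y_0(n)\,s(j,1)/(0!\,j!)=(j-1)!/j!=1/j$, which is exactly the factor $1/j$ appearing inside the sum in \eqref{equ:Mneimneh-type-mhtss-2-cor-2}.

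I expect no real obstacle here, since the result is a direct substitution into an already-established identity; the proof amounts to bookkeeping of the sign $(-1)^{m+p}$, the emptiness of $\{1\}_0$, and the single Stirling value $s(j,1)=(j-1)!$. Assembling these observations reproduces \eqref{equ:Mneimneh-type-mhtss-2-cor-2} verbatim, which completes the proof. (The companion Corollary \ref{cor:Mneimneh-type-mhtss-2-cor-1} is obtained in the same spirit by instead setting $m=1$.)
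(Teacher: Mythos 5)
Your proposal is correct and is precisely the paper's own route: the paper derives this corollary by setting $p=1$ in Theorem \ref{thm:Mneimneh-type-mhtss-2}, and your bookkeeping — the collapse of $(\{1\}_0,m+1)$ to $(m+1)$, the sign $(-1)^{m+1}=(-1)^{m-1}$, and the reduction of the first bracket to $Y_0(n)s(j,1)/j!=1/j$ via $Y_0=1$ and $s(j,1)=(j-1)!$ — supplies exactly the details the paper leaves implicit.
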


Further, from Theorems \ref{thm:Mneimneh-type-mhtss}-\ref{thm:Mneimneh-type-mhtss-3}, and Corollaries \ref{cor:Mneimneh-type-mhtss-2-cor-1}-\ref{cor:Mneimneh-type-mhtss-2-cor-two}, we obtain the following specific cases.
\begin{exa} For any reals $x,y$ with $x/(x+y)\geq 0$,
\begin{align*}
&\sum_{k=0}^n x^ky^{n-k}\binom{n}{k} {\bar H}^{(2)}_k=\sum_{n\geq j\geq l\geq 1} \frac{1}{jl}(x+y)^{n-j}y^{j-l}\Big(y^l-(y-x)^l\Big),\\
&\sum_{k=0}^n x^ky^{n-k}\binom{n}{k} {H}^{(3)}_k=\sum_{n\geq j\geq l\geq 1} \frac{1}{jl}(x+y)^{n-j}y^{j-l}\Big((x+y)^l-y^l\Big)(H_j-H_{l-1}),\\
&\sum_{k=0}^n x^ky^{n-k}\binom{n}{k} \ze^\star_k(1,2)=\sum_{n\geq j\geq l\geq 1} \frac{1}{jl}(x+y)^{n-j}y^{j-l}\Big((x+y)^l-y^l\Big)(H_n-H_{j-1}),\\
&\sum_{k=0}^n x^ky^{n-k}\binom{n}{k} \ze^\star_k(2,1)=\sum_{n\geq j\geq l\geq h\geq 1} \frac{1}{jlh} (x+y)^{n-j}y^{j-l} (x+y)^l \Big(1-(x+y)^{-h}y^h\Big),
\end{align*}
where ${\bar H}^{(2)}_k$ is the \emph{$k$-th alternating harmonic number of order 2} defined by
\begin{align*}
{\bar H}^{(2)}_k:=\sum_{j=1}^k \frac{(-1)^{j-1}}{j^2}.
\end{align*}
More generally, the \emph{$k$-th alternating harmonic number of order p} ${\bar H}^{(p)}_k\ (p\in \N)$ is defined by
\begin{align*}
{\bar H}^{(p)}_k:=\sum_{j=1}^k \frac{(-1)^{j-1}}{j^p}\quad \text{and} \quad {\bar H}^{(p)}_0:=0.
\end{align*}
\end{exa}

Noting the fact that
\[H_kH_k^{(2)}=\ze^\star_k(1,2)+\ze^\star_k(2,1)-H_k^{(3)},\]
we get the Mneimneh-type binomial sums involving two harmonic numbers
\begin{align}\label{equ-harmonic-12}
&\sum_{k=0}^n x^ky^{n-k}\binom{n}{k}H_kH_k^{(2)}\nonumber \\
&=\sum_{k=0}^n x^ky^{n-k}\binom{n}{k} \ze^\star_k(1,2)+\sum_{k=0}^n x^ky^{n-k}\binom{n}{k} \ze^\star_k(2,1)-\sum_{k=0}^n x^ky^{n-k}\binom{n}{k} {H}^{(3)}_k\nonumber \\
&=\sum_{n\geq j\geq l\geq 1} \frac{1}{jl}(x+y)^{n-j}y^{j-l}\Big((x+y)^l-y^l\Big)\Big(H_n-2H_{j}+H_l+\frac1{j}-\frac1{l}\Big)\nonumber \\
&\quad+\sum_{n\geq j\geq l\geq 1} \frac{1}{jl}(x+y)^{n-j}y^{j-l}(x+y)^l \left(H_l-\sum_{h=1}^l \frac{(x+y)^{-h}y^h}{h}\right).
\end{align}
On the other hand, from \cite[Thm. 1.1]{PX2024}, setting $p=$3 yields
\begin{align}\label{equ-harmonic-123}
&\sum_{k=0}^n x^ky^{n-k}\binom{n}{k}\Big(H_k^3+3H_kH_k^{(2)}+2H_k^{(3)}\Big)\nonumber \\
&=3(x+y)^n \sum_{j=0}^n \frac{1}{j}\left(1-\Big(\frac{y}{x+y}\Big)^j\right)\left\{H_n^2+H_n^{(2)}-2H_nH_{j-1}+H_{j-1}^2-H_{j-1}^{(2)} \right\}.
\end{align}
Hence, by an elementary calculation, we deduce the Mneimneh-type binomial sums involving cubic harmonic numbers
\begin{align}\label{equ-harmonic-123}
&\sum_{k=0}^n x^ky^{n-k}\binom{n}{k}H_k^3\nonumber \\
&=3(x+y)^n \sum_{j=0}^n \frac{1}{j}\left(1-\Big(\frac{y}{x+y}\Big)^j\right)\left\{H_n^2+H_n^{(2)}-2H_nH_{j-1}+H_{j-1}^2-H_{j-1}^{(2)} \right\}\nonumber\\
&\quad-\sum_{n\geq j\geq l\geq 1} \frac{1}{jl}(x+y)^{n-j}y^{j-l}\Big((x+y)^l-y^l\Big)\Big(3H_n-4H_{j}+H_l+\frac3{j}-\frac1{l}\Big)\nonumber \\
&\quad-3\sum_{n\geq j\geq l\geq 1} \frac{1}{jl}(x+y)^{n-j}y^{j-l}(x+y)^l \left(H_l-\sum_{h=1}^l \frac{(x+y)^{-h}y^h}{h}\right).
\end{align}

\subsection{Proof of Theorem \ref{thm:Mneimneh-type-mhtss-4}}
The proof of Theorem \ref{thm:Mneimneh-type-mhtss-4} is based on the following well-known Toeplitz limit theorem.

\begin{lem}\label{lem-Kn1990} (\cite[Thm. 4]{Kn1990}) Assume that $\lim_{n\rightarrow \infty }x_n=0$ and the terms $a_{nk}$ form a triangular system
\begin{align*}
\left( {\begin{array}{*{20}{c}}
{{a_{00}}}&0&0& \cdots &0\\
{{a_{10}}}&{{a_{11}}}&0& \cdots &0\\
{{a_{20}}}&{{a_{21}}}&{{a_{22}}}& \cdots &0\\
 \vdots & \vdots & \vdots & \ddots & \vdots \\
{{a_{n0}}}&{{a_{n1}}}&{{a_{n2}}}& \cdots &{{a_{nn}}}
\end{array}} \right)
\end{align*}
and satisfy the following conditions:
(a)  every column of the system contains a null sequence, i.e. for fixed $k\geq 0$, $\lim_{n\rightarrow \infty} a_{nk}=0$, (b) there exists a constant $K$ independent of $n$ such that $\sum_{k=0}^n |a_{nk}|<K$. Then
\begin{align}
\lim_{n\rightarrow \infty} \sum_{k=0}^n a_{nk} x_k=0.
\end{align}
\end{lem}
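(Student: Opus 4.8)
The plan is to run the standard splitting argument for regular matrix summation, in which the sum is cut at a fixed index beyond which $|x_k|$ is small. First I would record the two elementary facts that drive the estimate. Since $\lim_{n\to\infty}x_n=0$, the sequence $(x_k)$ is in particular bounded, say $|x_k|\le M$ for all $k\ge 0$; and for any prescribed $\varepsilon>0$ there is an index $N$ with $|x_k|<\varepsilon$ whenever $k>N$. The key point is that $N$ is chosen and frozen \emph{before} letting $n\to\infty$, so that the ``head'' of the sum involves only finitely many columns of the triangular array.

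Next I would split the row sum at the index $N$:
\begin{align*}
\Big|\sum_{k=0}^n a_{nk}x_k\Big|\le \sum_{k=0}^{N}|a_{nk}|\,|x_k|+\sum_{k=N+1}^{n}|a_{nk}|\,|x_k|.
\end{align*}
The tail is controlled by hypothesis (b): using $|x_k|<\varepsilon$ for $k>N$ gives
$\sum_{k=N+1}^{n}|a_{nk}|\,|x_k|\le \varepsilon\sum_{k=0}^{n}|a_{nk}|<\varepsilon K$, a bound uniform in $n$. The head is a sum of the \emph{fixed} number $N+1$ of terms, each at most $M\,|a_{nk}|$, so that $\sum_{k=0}^{N}|a_{nk}|\,|x_k|\le M\sum_{k=0}^{N}|a_{nk}|$; by hypothesis (a) each of the finitely many column entries $a_{n0},\dots,a_{nN}$ tends to $0$ as $n\to\infty$, hence this finite sum tends to $0$, and there is $N'$ such that it stays below $\varepsilon$ for all $n>N'$.

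Combining the two estimates, for every $n>\max\{N,N'\}$ we obtain $\big|\sum_{k=0}^{n}a_{nk}x_k\big|<\varepsilon(1+K)$. Since $K$ is a fixed constant and $\varepsilon>0$ was arbitrary, the left-hand side tends to $0$, which is the assertion.

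The one genuine subtlety, and the step I would flag as the crux, is that hypothesis (a) by itself cannot control the whole row sum, because the number of summands grows with $n$ and a sum of arbitrarily many individually small terms need not be small. Hypothesis (b) is exactly what dominates the growing tail, while (a) is used only on the frozen head of $N+1$ columns. Consequently the entire argument reduces to keeping the order of quantifiers straight: first fix $\varepsilon$, then choose $N$ from the convergence of $(x_k)$, and only afterwards choose $N'$ from the column null-sequence condition.
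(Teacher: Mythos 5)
Your proof is correct, and since the paper itself gives no proof of this lemma --- it is quoted directly from Knopp's book --- there is nothing to diverge from: your splitting argument, cutting the row sum at a fixed index $N$ so that condition (a) handles the frozen head of $N+1$ columns while condition (b) dominates the tail where $|x_k|<\varepsilon$, is precisely the classical proof of the Toeplitz limit theorem as it appears in the cited source. Your quantifier bookkeeping (fix $\varepsilon$, then $N$, then $N'$) and the final bound $\varepsilon(1+K)$ are exactly right.
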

{\bf Proof of Theorem \ref{thm:Mneimneh-type-mhtss-4}}. We only prove that Theorem \ref{thm:Mneimneh-type-mhtss-4} holds for $r\geq 2$. From Theorem \ref{thm:Mneimneh-type-mhtss-3}, setting $p=1,\ x+y=1$ and $x\in [0,1)$ gives
\begin{align}\label{equ-special-casep=1}
\sum_{k=0}^n (1-y)^ky^{n-k}\binom{n}{k} \zeta_k^\star(m+2,\{1\}_{r-1})= \sum_{n\geq n_1\geq \cdots\geq n_{m+r+1}\geq 1} \frac{y^{n_1-n_{m+2}}(1-y^{n_{m+r+1}})}{n_1\cdots n_{m+r+1}}.
\end{align}
By the relation in \eqref{equ-special-casep=1}, letting $n\rightarrow \infty$, we immediately discover that for $y\neq 0$
\begin{align}\label{equ-special-casep=1-one}
&\lim_{n\rightarrow \infty} \sum_{k=0}^n (1-y)^ky^{n-k}\binom{n}{k} \zeta_k^\star(m+2,\{1\}_{r-1})\nonumber\\
&=\Li^\star_{\{1\}_{m+r+1}}\big(y,\{1\}_m,y^{-1},\{1\}_{r-1}\big)-\Li^\star_{\{1\}_{m+r+1}}\big(y,\{1\}_m,y^{-1},\{1\}_{r-2},y\big).
\end{align}
Therefore, for the proof of \eqref{equ:Mneimneh-type-mhtss-4}, it suffices to show that
\begin{align}\label{equ-special-casep=1-two}
\lim_{n\rightarrow \infty} \left( \zeta^\star(m+2,\{1\}_{r-1})-\sum_{k=0}^n (1-y)^ky^{n-k}\binom{n}{k} \zeta_k^\star(m+2,\{1\}_{r-1})\right)=0,
\end{align}
where $m,r,n$ and $y$ satisfy the assumptions of Theorem \ref{thm:Mneimneh-type-mhtss-4}.

Noting the fact that
\[\sum_{k=0}^n (1-y)^ky^{n-k}\binom{n}{k}=1,\]
we have
\begin{align}\label{equ-special-casep=1-three}
&\zeta^\star(m+2,\{1\}_{r-1})-\sum_{k=0}^n (1-y)^ky^{n-k}\binom{n}{k} \zeta_k^\star(m+2,\{1\}_{r-1})\nonumber\\
&=\sum_{k=0}^n (1-y)^ky^{n-k}\binom{n}{k} \left(\zeta^\star(m+2,\{1\}_{r-1})-\zeta_k^\star(m+2,\{1\}_{r-1})\right).
\end{align}
Now, set for every $k,n\in \N$,
\begin{align*}
&a_{nk}:=\binom{n}{k} (1-y)^k y^{n-k}\quad\text{and}\quad x_k:=\zeta^\star(m+2,\{1\}_{r-1})-\zeta_k^\star(m+2,\{1\}_{r-1})
\end{align*}
and let us verify the assumptions of Lemma \ref{lem-Kn1990}. Clearly,  the condition $\lim_{k\rightarrow \infty}x_k=0$ and condition (a) $\lim_{n\rightarrow \infty }a_{nk}=0$ are satisfied (since $y\in (0,1)$). To fulfill condition (b), it is necessary that $\sum_{k=0}^n |a_{nk}|$ is bounded. Noticing that
\begin{align}\label{equ-special-casep=1-four}
\sum_{k=0}^n |a_{nk}|=\sum_{k=0}^n \binom{n}{k} |(1-y)^ky^{n-k}|=(1-y+y)^n=1,
\end{align}
and applying Lemma 3.9, we obtain the desired evaluation \eqref{equ:Mneimneh-type-mhtss-4}. Similarly, we also prove \eqref{equ:Mneimneh-type-mhtss-4-2}. This concludes the proof of Theorem \ref{thm:Mneimneh-type-mhtss-4}.

\section{A Conjecture}\label{sec-conj}

For any ${\bfk}_{}:=(k_1,k_2,\ldots,\ k_{r})$, we put
\begin{align*}
|{\bfk}|_j:=k_1+k_2+\cdots+k_j\quad\text{and}\quad |{\bfk}|_0:=0.
\end{align*}

Now, we end this paper by the following conjecture.
\begin{con}\label{con:Mneimneh-type-mhtss} Let $r\in \N_0$ and $n\in \N$. For any reals $x,y$ and ${\bf p}_{r+1}:=(p_1,p_2,\ldots,\ p_{r+1})\in \N^{r+1},\ {\bf m}_{r}:=(m_1,\ldots,m_r)\in \N_0^r$, we have
\begin{align}\label{conequ:Mneimneh-type-mhtss}
&\sum_{k=0}^n x^ky^{n-k}\binom{n}{k} \zeta_k^\star\left(\{1\}_{p_1-1},m_1+2,\ldots,\{1\}_{p_r-1},m_r+2,\{1\}_{p_{r+1}-1}\right)\nonumber\\&
=(x+y)^n \sum_{n\geq n_1\geq \cdots \geq n_{|{\bf p}|_{r+1}+|{\bf m}|_r+r-1}\geq 1} \frac{\left(\frac{y}{x+y}\right)^{\sum_{j=1}^r \Big(n_{|{\bf p}|_{j}+|{\bf m}|_{j-1}+j-1}-n_{|{\bf p}|_{j}+|{\bf m}|_{j}+j}\Big)}}{n_1\cdots n_{|{\bf p}|_{r+1}+|{\bf m}|_r+r-1}}\nonumber\\&
\quad\quad\quad\quad\quad\quad\quad\quad\quad\quad\quad\quad\quad\quad\quad\quad\times \left(1-\left(\frac{y}{x+y}\right)^{n_{|{\bf p}|_{r+1}+|{\bf m}|_r+r-1}}\right).
\end{align}
\end{con}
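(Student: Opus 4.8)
The plan is to prove Conjecture \ref{con:Mneimneh-type-mhtss} by induction on $r$, the number of ``peaks'' $m_i+2$, running exactly the two integral operations that already carried the argument from Theorem \ref{thm:Mneimneh-type-mhtss} to Theorem \ref{thm:Mneimneh-type-mhtss-2} to Theorem \ref{thm:Mneimneh-type-mhtss-3}, but now performed an arbitrary number of times. The base case $r=0$ is precisely Theorem \ref{thm:Mneimneh-type-mhtss} specialized to $p\mapsto p_1-1$ and $z=1$: at $z=1$ the intermediate identity \eqref{equ-mhn-1-z-mhss-2} reads $\sum_{k}x^ky^{n-k}\binom{n}{k}\zeta_k^\star(\{1\}_{p_1-1})=(x+y)^n\sum_{n\geq n_1\geq\cdots\geq n_{p_1-1}\geq1}\frac{1-(y/(x+y))^{n_{p_1-1}}}{n_1\cdots n_{p_1-1}}$, which is the right-hand side of \eqref{conequ:Mneimneh-type-mhtss} when the exponent sum $\sum_{j=1}^{r}(\cdots)$ is empty and $N:=|{\bf p}|_{r+1}+|{\bf m}|_r+r-1=p_1-1$. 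The case $r=1$ is Theorem \ref{thm:Mneimneh-type-mhtss-3}.

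For the inductive step I would carry, as the inductive invariant, the $z$-parametrized master identity for $\sum_{k}x^ky^{n-k}\binom{n}{k}\zeta_k^\star(\{1\}_{p_1-1},m_1+2,\ldots,\{1\}_{p_s-1};z)$, with $z$ sitting on the deepest summation index exactly as in \eqref{defn-mhtss}; this is the natural $s$-block generalization of \eqref{equ-mhn-1-z-mhss-2}, in which the deepest factor is $((xz+y)/(x+y))^{n_N}-(y/(x+y))^{n_N}$ and each already-built peak contributes a factor $(y/(x+y))^{n_{a_j}-n_{b_j}}$. One more block is then installed by two operations. First, mirroring Theorem \ref{thm:Mneimneh-type-mhtss-2}, I replace $z$ by $t_{m}$ and apply $\int_{z>t_1>\cdots>t_{m}>0}\frac{(\cdot)}{t_1\cdots t_{m}}\,dt_{m}\cdots dt_1$; after factoring the deepest bracket as $(y/(x+y))^{n}\big((\tfrac{x}{y}t_m+1)^{n}-1\big)$, Proposition \ref{pro-one-miter} with $a=x/y$ — equivalently \eqref{equ-MInteMhs} and \eqref{equ-emhn-Bell-Str} — installs the next peak of weight $m+1$ together with $m$ new deeper indices. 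Second, mirroring Theorem \ref{thm:Mneimneh-type-mhtss-3}, I multiply by $\log^{p'-1}(1-z)/z$ and integrate, but invoke Lemma \ref{lem-x-mhs-Log} in place of Lemma \ref{lem-Bell-Log}: the factor $1/z$ bumps that peak from $m+1$ to $m+2$, the factor $\log^{p'-1}(1-z)$ appends the next valley $\{1\}_{p'-1}$ via the \eqref{equ-Bell-Log}-type evaluation, and using Lemma \ref{lem-x-mhs-Log} retains a fresh free parameter $z$ on the new deepest index, thereby restoring the invariant for the next iteration. After $r$ iterations the composition of \eqref{conequ:Mneimneh-type-mhtss} is reached, and setting $z=1$ converts $\zeta_k^\star(\cdots;1)$ into $\zeta_k^\star(\cdots)$ and the deepest factor into $1-(y/(x+y))^{n_N}$.

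The hard part will be the index bookkeeping rather than any new analytic input. Each application of the operator $\int\frac{(\cdot)}{t_1\cdots t_m}$ reindexes the whole nested sum — as in the passage from \eqref{eq-PMPLs2} to \eqref{equ-emhn-Bell-Str}, the inner variables are shifted by the running lower index — so one must track absolute labels through $r$ successive reindexings and verify that the cumulative effect produces exactly the stated positions $a_j=|{\bf p}|_j+|{\bf m}|_{j-1}+j-1$ and $b_j=|{\bf p}|_j+|{\bf m}|_j+j$ in the exponent $\sum_{j=1}^r(n_{a_j}-n_{b_j})$, as well as the total variable count $N=|{\bf p}|_{r+1}+|{\bf m}|_r+r-1$. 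I expect this to be manageable by maintaining, as part of the inductive hypothesis, explicit closed forms for $a_j,b_j,N$ after $s$ blocks and checking their one-step update. The remaining care is for the degenerate blocks: when $p_i=1$ the valley $\{1\}_{p_i-1}$ is empty, so the second operation with $p'=1$ appends nothing and two consecutive peaks become adjacent; when $m_i=0$ the first operation is the trivial empty iterated integral. Both cases are consistent with the displayed formula — the corresponding index ranges simply collapse — but they must be checked separately so that the induction covers all $({\bf p},{\bf m})\in\N^{r+1}\times\N_0^r$.
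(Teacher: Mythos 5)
First, context that matters for this review: the statement you are proving is stated in the paper as Conjecture \ref{con:Mneimneh-type-mhtss} and is left \emph{open} there, so there is no proof of the paper's to compare against; your attempt must stand on its own. Its verifiable parts are sound: the base case $r=0$ is indeed \eqref{equ-mhn-1-z-mhss-2} at $z=1$ with $p=p_1-1$, the case $r=1$ is Theorem \ref{thm:Mneimneh-type-mhtss-3}, your index formulas $a_j=|{\bf p}|_j+|{\bf m}|_{j-1}+j-1$, $b_j=|{\bf p}|_j+|{\bf m}|_j+j$, $N=|{\bf p}|_{r+1}+|{\bf m}|_r+r-1$ agree with \eqref{equ:Mneimneh-type-mhtss-3} at $r=1$, and your proposed invariant is consistent with the intermediate identity \eqref{equ:Mneimneh-type-mhtss-2-anh}, whose right-hand factor can be rewritten as $(y/(x+y))^{n_p-n_{p+m}}\bigl(((xz+y)/(x+y))^{n_{p+m}}-(y/(x+y))^{n_{p+m}}\bigr)$, i.e.\ a peak factor times a deepest bracket vanishing at $z=0$.

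However, the inductive step has a genuine gap at exactly the point where the paper's chain of arguments stops. The paper's passage from \eqref{equ:Mneimneh-type-mhtss-2-anh} to Theorem \ref{thm:Mneimneh-type-mhtss-3} multiplies by $\log^{r-1}(1-z)/z$ and integrates over the \emph{full} interval $(0,1)$, which consumes the free parameter; Proposition \ref{pro-two-miter} evaluates only $\int_0^1\log^m(1-t)(1+\alpha t)^{j-1}\,dt$ and no partial-interval version exists in the paper. So already the first inductive step ($r=1$ to $r=2$) requires a statement strictly stronger than Theorem \ref{thm:Mneimneh-type-mhtss-3}, namely a $z$-parametrized version of it. Your one-clause repair --- ``invoke Lemma \ref{lem-x-mhs-Log} in place of Lemma \ref{lem-Bell-Log} \ldots\ retains a fresh free parameter $z$'' --- conceals three unproved claims: (i) on the left, Lemma \ref{lem-x-mhs-Log} produces, besides $\zeta_j^\star(\{1\}_{p'-1};z)$, an array of boundary terms $\log^{p'-1-i}(1-z)\bigl(\zeta_j^\star(\{1\}_i;z)-\zeta_j^\star(\{1\}_i)\bigr)$; (ii) on the right you need a new, nowhere-available evaluation of $\int_0^{z}\log^{p'-1}(1-t)(1+\alpha t)^{l-1}\,dt$, which carries its own $\log(1-z)$-corrections; (iii) you must show all these corrections cancel between the two sides so that the surviving deepest bracket again vanishes at $z=0$ --- and this vanishing is not cosmetic, since the next application of $\int_{z>t_1>\cdots>t_m>0}(\cdot)/(t_1\cdots t_m)$ diverges on any $z$-independent remainder. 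In the depth-one situation this cancellation is precisely the multinomial collapse $\log(1-z)+\log\frac{x}{x+y}+\log\frac{x+y}{(1-z)x}=0$ in the proof of Theorem \ref{thm:Mneimneh-type-mhtss}; whether it persists under nested sums against the kernel $(1+\alpha t)^{l-1}$ is the actual mathematical content of the conjecture, and your sketch asserts it rather than proves it. Until that step is carried out --- or replaced by a different valley-appending device that keeps $z$, for instance iterating the recursion $\frac{d}{dz}\,\zeta_n^\star(\bfk,1;z)=\bigl(\zeta_n^\star(\bfk)-\zeta_n^\star(\bfk;z)\bigr)/(1-z)$ --- the induction does not close; by comparison, the index bookkeeping you single out as the hard part is routine.
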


\begin{re}
Applying the Toeplitz principle, we also deduce the corresponding (conjectural) functional multiple polylogarithmic identity similar to Theorem \ref{thm:Mneimneh-type-mhtss-4}.
\end{re}

\medskip

{\bf Declaration of competing interest.}
The authors declares that they has no known competing financial interests or personal relationships that could have
appeared to influence the work reported in this paper.

{\bf Data availability.}
No data was used for the research described in the article.

{\bf Acknowledgments.} The authors are grateful to Prof. M. Gen$\check{\rm c}$ev for his opinion concerning the relation in Theorem \ref{thm:Mneimneh-type-mhtss-4}. Ce Xu is supported by the National Natural Science Foundation of China (Grant No. 12101008), the Natural Science Foundation of Anhui Province (Grant No. 2108085QA01) and the University Natural Science Research Project of Anhui Province (Grant No. KJ2020A0057).

\end{document}